\newtheorem{theorem}{Theorem}[section]
\newtheorem{lemma}[theorem]{Lemma}
\newtheorem{claim}[theorem]{Claim}
\newtheorem{proposition}[theorem]{Proposition}
\newtheorem{conjecture}[theorem]{Conjecture}
\theoremstyle{remark}
\newtheorem{remark}[theorem]{Remark}
\theoremstyle{definition}
\newtheorem*{definition}{Definition}
\numberwithin{equation}{section}
\newcommand{\Q}{\mathbb{Q}}
\begin{document}

\title{Schanuel Type Conjectures and Disjointness}
\author[Isaac Broudy]{Isaac A. Broudy}
\address{Department of Mathematics, University of California, Berkeley, CA, USA.}
\email{iabroudy@berkeley.edu}

\author[Sebastian Eterovi\'c]{Sebastian Eterovi\'c$^\ast$}
\address{School of Mathematics, University of Leeds, Leeds, UK.}
\email{s.eterovic@leeds.ac.uk}

\date{\today}

\thanks{$^\ast$corresponding author. Supported by NSF RTG grant DMS-1646385. ORCiD: 0000-0001-6724-5887}

\keywords{Complex exponential, $j$-function, linear disjointness, $G$-disjointness, Schanuel's conjecture, Ax--Schanuel}

\subjclass[2010]{11F03, 11J81, 11J85, 11J89}

\maketitle

\begin{abstract}
    Given a subfield $F$ of $\mathbb{C}$, we study the linear disjointess of the field $E$ generated by iterated exponentials of elements of $\overline{F}$, and the field $L$ generated by iterated logarithms, in the presence of Schanuel's conjecture. We also obtain similar results replacing $\exp$ by the modular $j$-function, under an appropriate version of Schanuel's conjecture, where linear disjointness is replaced by a notion coming from the action of $\mathrm{GL}_2$ on $\mathbb{C}$. We also show that for certain choices of $F$ we obtain unconditional versions of these statements. 
\end{abstract}

\section{Introduction}
Let $\exp(z)$ denote the usual complex exponential function ($\exp(z) = e^{z}$). Set $E_0=L_0=\overline{\mathbb{Q}}$ and for every positive integer $n$ define 
\begin{equation*}
    E_n := \overline{E_{n-1}(\{\exp(x)\mid x\in E_{n-1}\})}\quad\mbox{ and }\quad L_n := \overline{L_{n-1}(\{x \mid \exp(x)\in L_{n-1}\})},
\end{equation*}
and let $E := \bigcup_{n=1}^{\infty} E_n$ and $L :=\bigcup_{n=1}^{\infty} L_n$. It is shown in \cite{SchanuelConsequence} that Schanuel's conjecture implies that $E$ and $L$ are linearly disjoint over $\overline{\mathbb{Q}}$. Similar results have been obtained more recently in \cite{SemiabelianSchanuel} and \cite{Phillipon} where $\exp(z)$ is replaced by the exponential of an (semi-) abelian variety. 

The aim of this paper is to do three things. The first objective is, still assuming Schanuel's conjecture, to find more general finitely generated subfields $F$ of $\mathbb{C}$ such that if we set the initial step of the towers $E_{0}$ and $L_{0}$ to be the algebraic closure of $F$, then the resulting fields $E$ and $L$ are linearly disjoint over $\overline{F}$. This is achieved in Theorem \ref{thm:exp}.

The second objective is to show that under a version of Schanuel's conjecture for the modular $j$-function (that we call MSCD), one can produce an analogous result for the modular $j$-function. More specifically, set $J_{0} = K_{0} = \overline{F}$, where $F$ is a finitely generated subfield of $\mathbb{C}$, and inductively define
\begin{equation*}
    J_n := \overline{J_{n-1}(\{j(z)\mid z\in J_{n-1}\cap\mathbb{H}^{+}\})}\quad\mbox{ and }\quad K_n := \overline{K_{n-1}(\{z\in\mathbb{H}^{+}\mid j(z)\in K_{n-1}\})}.
\end{equation*}
Set $J := \bigcup_{n=1}^{\infty} J_n$ and $K:=\bigcup_{n=1}^{\infty} K_n$. Unlike $\exp$, the $j$-function is not a group homomorphism, and so linear disjointness is not the right notion for proving a relation between $J$ and $K$. Instead we will define a notion of disjointness that previously appeared in \cite{EterovicSchanuel} that considers the action of the group $G:=\mathrm{GL}_{2}$. This is achieved in Theorem \ref{thm:j}

The final objective is to find initial fields $F$ so that the linear disjointness of $E$ and $L$ is obtained unconditionally, that is, without having to rely on Schanuel's conjecture. We also obtain an analogous unconditional result for $j$. This is done in Theorems \ref{thm:unconditionalexp} and \ref{thm:unconditionalj}.

The methods used to prove all of our main results rely mostly on the work of \cite{Closure4j} on \emph{convenient generators}, which in turn rely heavily on the so-called Ax-Schanuel theorems: \cite{AX} in the case of $\exp$ and \cite{AxSchanuel4j} in the case of $j$. We expect that similar constructions can be performed whenever an Ax-Schanuel theorem is available in differential form, and so the methods presented here can be expected to extend to other situations, such as the exponential maps of semi-abelian varieties (using \cite{ax2} or \cite{kirby-semiab}) or the uniformization maps of Shimura varieties (using \cite{mpt}).

\subsection{Structure of the paper} In \textsection \ref{sec:prelim}, we introduce preliminary definitions and results regarding linear disjointness and $G$-disjointness. In \textsection \ref{sec:schanuel}, we review several Schanuel-type inequalities, in particular the Modular Schanuel Conjecture, and discuss the details of convenient generators. We prove the main results of this paper in \textsection \ref{sec:exp} and \textsection \ref{sec:j}.

\section{Preliminaries}
\label{sec:prelim}
\subsection{Basic notation}
\begin{itemize}
    \item If $F$ is any subfield of $\mathbb{C}$, then $\overline{F}$ its algebraic closure in $\mathbb{C}$. 
    \item If $x_{1},\ldots,x_{m}$ are elements of $\mathbb{C}$, then we use $\mathbf{x}$ to denote $(x_{1},\ldots,x_{m})$. Furthermore, if $f$ denotes a function, then we write $f(\mathbf{x})$ to mean $(f(x_{1}),\ldots,f(x_{m}))$. 
    \item As mentioned in the introduction, let $G$ denote the linear group $\mathrm{GL}_{2}$. For any subfield $F$ of $\mathbb{C}$ there is a natural action of $G(F)$ on $\mathbb{P}^{1}(\mathbb{C}) = \mathbb{C}\cup\left\{\infty\right\}$ given by M\"obius transformations as follows:
\begin{equation*}
    gx := \frac{ax+b}{cx+d},\quad\mbox{where } g=\left(\begin{matrix}
a & b\\
c & d
\end{matrix}\right),
\end{equation*} 
with $g\in G(F)$. Whenever we say that $G(F)$ \emph{acts on $\mathbb{C}$}, it will be in this manner. 
\end{itemize}

\subsection{Closures, dimensions and disjointness}

In this section we introduce the various notions of disjointess that will be used for our main results, and we review some of their properties. 

\begin{definition}
Let $F\subseteq\mathbb{C}$ be a subfield and let $A\subset \mathbb{C}$ be a finite subset.
\begin{enumerate}[(a)]
    \item Thinking of $\mathbb{C}$ as an $F$-vector space, we denote by $\mathrm{l.dim}_{F}(A)$ the dimension of the $F$-vector space $\mathrm{Span}_{F}(A)$, the $F$-linear span of $A$.
    
    Given another subset $B\subseteq\mathbb{C}$, let $\pi:\mathbb{C}\rightarrow\mathbb{C}/\mathrm{Span}_{F}(B)$ denote the quotient map. We write $\mathrm{l.dim}_{F}(A|B)$ to denote the dimension of the $F$-vector space $\pi(\mathrm{Span}_{F}(A))$. 
    \item Considering the action of $G(F)$ on $\mathbb{C}$, we denote by $\dim_{G(F)}(A)$ the number of distinct $G(F)$-orbits generated by elements of $A$. We say that $A$ is $G(F)$\emph{-independent} if $|A| = \dim_{G(F)}(A)$.
    
    Given another subset $B\subseteq\mathbb{C}$, we denote by $\dim_{G(F)}(A|B)$ the number of distinct $G(F)$-orbits generated by elements of $A$ which do not contain elements from $B$.
\end{enumerate}
\end{definition}

\begin{definition}
Let $E,F,L$ be subfields of $\mathbb{C}$ such that $E\subseteq F\cap L$. 
\begin{enumerate}[(a)]
    \item $F$ is \emph{linearly disjoint from $L$ over $E$}, denoted $F\bot^{l}_{E}L$, if every finite tuple of elements in $L$ that is $E$-linearly independent is also $F$-linearly independent. Equivalently, $F\bot^{l}_{E}L$ if and only if for any tuple $\boldsymbol{\ell}$ from $L$, $\mathrm{l.dim}_{F}\left(\boldsymbol{\ell}\right) = \mathrm{l.dim}_{E}\left(\boldsymbol{\ell}\right)$.
    \item $F$ and $L$ are $E$-\emph{free}, denoted $F\bot^{f}_{E}L$, if every finite set of elements of $L$ which is algebraically independent over $E$ is also algebraically independent over $F$. Equivalently, $F\bot^{f}_{E}L$ if and only if for any tuple $\boldsymbol{\ell}$ from $L$, $\mathrm{tr.deg.}_{F}F\left(\boldsymbol{\ell}\right) = \mathrm{tr.deg.}_{E}E\left(\boldsymbol{\ell}\right)$.
    \item We say that $F$ is \emph{$G(E)$-disjoint from $L$}, denoted $F\bot^{G}_{E}L$, if for every finite subset of elements of $L$ that is $G(E)$-independent is also $G(F)$-independent. Equivalently, $F\bot^{G}_{E}L$ if and only if for any tuple $\boldsymbol{\ell}$ from $L$, $\dim_{G(F)}\left(\boldsymbol{\ell}\right) = \dim_{G(E)}\left(\boldsymbol{\ell}\right)$. Equivalently, $F\bot^{G}_{E}L$ if and only if for any pair of elements $\ell_{1},\ell_{2}$ from $L$ for which there exists $g\in G(F)$ such that $g\ell_{1} = \ell_{2}$, there is $h\in G(E)$ such that $h\ell_{1} = \ell_{2}$.
\end{enumerate}
\end{definition}

\begin{remark}
Although the definitions are not stated in a symmetric way, both $\bot^{l}$ and $\bot^{f}$ are symmetric relations (see \cite[p. 360]{Lang} and \cite[p. 362]{Lang}). This leads naturally to the following question (which seems to be open).

\textbf{Question:} Is $G$-disjointness a symmetric relation, that is, is it true that for any three subfields $E, F, L\subseteq\mathbb{C}$ satisfying $E\subseteq F\cap L$ we have $F\bot^{G}_{E}L\iff L\bot^{G}_{E}F$?
\end{remark}

While the general case remains open, the following lemma provides a special case in which symmetry holds:
\begin{lemma} \label{LinearGL2}
Suppose that $E=L\cap F$. If $F$ is linearly disjoint from $L$ over $E$, then $F\bot^{G}_{E}L$ and $L\bot^{G}_{E}F$. 
\end{lemma}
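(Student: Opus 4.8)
The plan is to check the pairwise form of $G$-disjointness recorded in the definition: to get $F\bot^{G}_{E}L$ it is enough to show that whenever $\ell_{1},\ell_{2}\in L$ and some $g\in G(F)$ satisfies $g\ell_{1}=\ell_{2}$, there is already an $h\in G(E)$ with $h\ell_{1}=\ell_{2}$; the other relation $L\bot^{G}_{E}F$ then follows by running the same argument with $F$ and $L$ interchanged, using that $\bot^{l}$ is symmetric (as noted in the Remark). So fix such $\ell_{1},\ell_{2},g$. The first move is to linearise: writing $g=(a,b,c,d)$ for the matrix with those entries and introducing the $\mathbb{C}$-linear functional $\Lambda(x_{1},x_{2},x_{3},x_{4})=x_{1}\ell_{1}+x_{2}-x_{3}\ell_{1}\ell_{2}-x_{4}\ell_{2}$ on $\mathbb{C}^{4}$, whose coefficients lie in $L$, one checks that for \emph{any} invertible $h=(a',b',c',d')$ one has $h\ell_{1}=\ell_{2}$ in $\mathbb{P}^{1}(\mathbb{C})$ if and only if $\Lambda(h)=0$. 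Indeed, $\Lambda(h)=0$ rewrites as $a'\ell_{1}+b'=\ell_{2}(c'\ell_{1}+d')$; if $c'\ell_{1}+d'=0$ this forces $a'\ell_{1}+b'=0$ as well, so the invertible matrix $h$ would kill the nonzero vector $(\ell_{1},1)$, which is impossible; hence $c'\ell_{1}+d'\neq 0$ and $h\ell_{1}=(a'\ell_{1}+b')/(c'\ell_{1}+d')=\ell_{2}$ (and since $\ell_{1},\ell_{2}$ are genuine elements of $\mathbb{C}$, no point at infinity is involved). Thus the hypothesis says exactly that $g$ belongs to the $F$-subspace $V_{F}:=\ker\Lambda\cap F^{4}$, and our task is to produce an invertible element of $V_{E}:=\ker\Lambda\cap E^{4}$.

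Now linear disjointness enters. Since $\Lambda|_{F^{4}}$ has image $\mathrm{Span}_{F}\{1,\ell_{1},\ell_{2},\ell_{1}\ell_{2}\}$, and likewise over $E$, the equality $\mathrm{l.dim}_{F}(1,\ell_{1},\ell_{2},\ell_{1}\ell_{2})=\mathrm{l.dim}_{E}(1,\ell_{1},\ell_{2},\ell_{1}\ell_{2})$ coming from $F\bot^{l}_{E}L$ gives $\dim_{F}V_{F}=4-\mathrm{l.dim}_{F}(1,\ell_{1},\ell_{2},\ell_{1}\ell_{2})=4-\mathrm{l.dim}_{E}(1,\ell_{1},\ell_{2},\ell_{1}\ell_{2})=\dim_{E}V_{E}$; since $V_{E}\subseteq V_{F}$ and an $E$-basis of $V_{E}$ remains $F$-linearly independent in $F^{4}$, this forces $V_{F}=\mathrm{Span}_{F}(V_{E})$. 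Fix then an $E$-basis $w_{1},\dots,w_{r}$ of $V_{E}$; it is also an $F$-basis of $V_{F}$. The function $P(t_{1},\dots,t_{r}):=\det(t_{1}w_{1}+\cdots+t_{r}w_{r})$ is a polynomial of degree at most $2$ \emph{with coefficients in $E$}, and it is not identically zero, because its value at the coordinates of $g$ in this basis equals $\det g\neq 0$. As $E$ is infinite, we may choose $t^{0}\in E^{r}$ with $P(t^{0})\neq 0$; then $h:=\sum_{i}t^{0}_{i}w_{i}\in V_{E}$ is invertible, hence $h\in G(E)$, and $\Lambda(h)=0$ gives $h\ell_{1}=\ell_{2}$ by the preceding paragraph. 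This proves $F\bot^{G}_{E}L$, and the symmetric argument proves $L\bot^{G}_{E}F$.

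The only step with genuine content is the last one: linear disjointness is a statement about \emph{linear} independence and yields, a priori, only some nonzero vector in $V_{E}$, whereas $G(F)$-equivalence demands an \emph{invertible} matrix. What makes it work is that $\det$ restricted to the linear subspace $V_{F}=\mathrm{Span}_{F}(V_{E})$ is a nonzero polynomial defined over $E$, so the invertible $F$-point of $V_{F}$ that we are handed forces the existence of an invertible $E$-point. I expect this to be the crux; everything else is unwinding the definition of $\bot^{G}$ and of the $\mathrm{GL}_{2}$-action, together with the basic properties of linear disjointness cited in the Remark. (The hypothesis $E=L\cap F$ is in fact automatic once $F\bot^{l}_{E}L$ holds, so it need not be invoked separately.)
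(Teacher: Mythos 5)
Your proof is correct, but it takes a genuinely different route from the paper's. Both arguments begin identically: the relation $g\ell_1 = \ell_2$ is converted into the vanishing of the linear functional $\Lambda$ on $F$-points, whose coefficients $1, \ell_1, \ell_2, \ell_1\ell_2$ lie in $L$, and linear disjointness is invoked to transfer the $F$-linear dependence of this tuple to an $E$-linear dependence. The divergence is in how one produces an \emph{invertible} matrix from the resulting $E$-linear relation. The paper takes a single nontrivial $E$-relation $\gamma\ell_1\ell_2 - \alpha\ell_1 + \delta\ell_2 - \beta = 0$ and runs a case analysis: when the candidate matrix $h = \left(\begin{smallmatrix}\alpha & \beta\\ \gamma & \delta\end{smallmatrix}\right)$ is singular or degenerate, it deduces that $\ell_1$ or $\ell_2$ actually lies in $E$ (using $L\cap F = E$), and then uses transitivity of $G(E)$ on $E$; in the generic case $h$ is already in $G(E)$ and does the job. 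Your approach instead works with the whole $E$-vector space $V_E = \ker\Lambda \cap E^4$, observes that linear disjointness forces $V_F = \mathrm{Span}_F(V_E)$ (equal dimensions, with an $E$-basis of $V_E$ staying $F$-independent), and then applies the standard Zariski-density argument: $\det$ restricted to $V_F$ is a polynomial of degree $\leq 2$ with $E$-coefficients, nonzero because it evaluates to $\det g \neq 0$ at the coordinates of $g$, so since $E$ is infinite it has a nonzero value at some $E$-point. This eliminates the case analysis entirely and isolates the real obstruction (linear disjointness gives nonzero, not invertible, $E$-vectors) in a form that would generalize unchanged to $\mathrm{GL}_n$ or to other algebraic group actions, whereas the paper's case-by-case treatment leans on the $2\times 2$ specifics. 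Your side remark that $E = L\cap F$ is automatic from $F\bot^l_E L$ is also correct (if $t \in (L\cap F)\setminus E$, then $\{1,t\}$ is $E$-independent but $F$-dependent), which explains why your proof never needs to invoke that hypothesis while the paper's cases (a)--(c) do.
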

\begin{proof}
We will show that $F\bot^{l}_{E}L$ implies $F\bot^{G}_{E}L$, and since linear disjointness is a symmetric condition, this is enough. Suppose that $\ell_1,\ell_2 \in L$ and $g\in G(F)$, such that $g\ell_1 =\ell_2$. Let $g = \begin{pmatrix}
a & b\\
c & d\\
\end{pmatrix}$, with $a,b,c,d \in F$ satisfying $ad\neq bc$. Then we have
$0 = c \ell_1\ell_2 - a\ell_1 + d\ell_2  - b$, which is a non-trivial $F$-linear combination of the elements $1, \ell_{1}, \ell_2, \ell_1\ell_2$ (all of which are in $L$). As $F$ is linearly disjoint from $L$ over $E$, there exist $\alpha,\beta,\gamma,\delta \in E$ (not all zero) preserving the linear dependence:
$$0 = \gamma \ell_1\ell_2 - \alpha \ell_1 + \delta \ell_2  - \beta.$$
We now consider the possible cases.
\begin{enumerate}[(a)]
    \item Suppose that $\alpha \ell_1 + \beta = 0$. It follows that $\ell_1\in E$, and as $g\in G(F)$, then $\ell_{2} = g\ell_{1}\in F$. This shows that $\ell_{2}\in L\cap F$. Since $\ell_{1},\ell_{2}\in E$, then there exists $h\in G(E)$ such that $h\ell_1 = \ell_2$.
    \item Suppose that $\gamma \ell_1 + \delta = 0$. Again it follows that $\ell_1\in E$, so we conclude as in (a). 
    \item Suppose that $\alpha \ell_1 + \beta \neq 0$, $\gamma \ell_1 + \delta \neq 0$, and $\alpha\delta - \beta\gamma = 0$. Then there exists $e\in E$ such that $\alpha =e \gamma$ and $\beta  =e \delta$. Then $\ell_2 = \frac{\alpha \ell_1+\beta}{\gamma \ell_1 + \delta} = e$. Thus $\ell_2 \in E$, and we can conclude with a similar argument to (a).
    \item If none of the cases (a), (b), (c) are satisfied, then the matrix $h = \begin{pmatrix}
\alpha & \beta\\
\gamma & \delta\\
\end{pmatrix}$ is in $G(E)$ and satisfies $h\ell_1 = \ell_2$. 
\end{enumerate}
\end{proof}

Finally, we will make use of the following lemma throughout \textsection \ref{sec:j}:
\begin{lemma}\label{intersection}
%UPDATE: This needs to be modified so that it accurately reflects arbitrary case
If $F\bot^{G}_{E}L$, then $F \cap L = E$.
\end{lemma}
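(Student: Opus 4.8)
The plan is to establish the nontrivial inclusion $F\cap L\subseteq E$, since $E\subseteq F\cap L$ is already part of the hypothesis $F\bot^{G}_{E}L$ (that relation being defined only when $E\subseteq F\cap L$). So it suffices to take an arbitrary $\ell\in F\cap L$ and prove $\ell\in E$.

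First I would use $\ell$ to manufacture a M\"obius transformation with coefficients in $F$ linking two elements of $L$. The natural choice is the translation $g=\begin{pmatrix}1 & -\ell\\ 0 & 1\end{pmatrix}$: it lies in $G(F)$ because $\ell\in F$ and $\det g = 1\neq 0$, and it satisfies $g\ell=\frac{\ell-\ell}{1}=0$. Since both $\ell$ and $0$ lie in $L$, this is exactly a witness of the type that $G(E)$-disjointness controls.

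Next, applying the equivalent formulation of $F\bot^{G}_{E}L$ from the definition to the pair $\ell_1=\ell$, $\ell_2=0$, the existence of $g\in G(F)$ with $g\ell=0$ produces some $h=\begin{pmatrix}a & b\\ c & d\end{pmatrix}\in G(E)$ with $h\ell=0$, that is, $\frac{a\ell+b}{c\ell+d}=0$. I would then extract $a\ell+b=0$, and observe that $a\neq 0$: otherwise $b=0$ as well, so the first row of $h$ vanishes and $\det h=0$, a contradiction. Hence $\ell=-b/a$, and since $a,b\in E$ we get $\ell\in E$, which finishes the argument and gives $F\cap L=E$.

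There is no real obstacle here; the proof is a direct unwinding of the definitions. The only points needing a moment's care are verifying that the chosen $g$ truly lies in $G(F)$ with value $0$ at $\ell$ (so that $(\ell,0)$ is an admissible witness), and that passing from $\frac{a\ell+b}{c\ell+d}=0$ to $a\ell+b=0$ is legitimate --- which it is, because $\det h\neq 0$ prevents $c\ell+d$ from vanishing at the same time, so $h\ell$ is the genuine point $0$ rather than the indeterminate form.
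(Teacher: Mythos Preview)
Your proof is correct and follows essentially the same approach as the paper's: both construct an explicit element of $G(F)$ linking two elements of $L$ via the element $\ell\in F\cap L$, then use $G(E)$-disjointness to pull the relation down to $G(E)$ and read off $\ell\in E$. The only cosmetic difference is the choice of witness matrix --- you use the translation $\begin{pmatrix}1 & -\ell\\0 & 1\end{pmatrix}$ sending $\ell\mapsto 0$, while the paper uses the dilation $\begin{pmatrix}\ell & 0\\0 & 1\end{pmatrix}$ sending $1\mapsto \ell$ --- and that you argue directly rather than by contradiction.
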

\begin{proof}
Suppose $t\in L \setminus E$, and by way of contradiction suppose $t\in F$. It follows that $g= 
\begin{pmatrix}
t & 0 \\
0 & 1 
\end{pmatrix} \in G(F)$. Since $g 1 = t$ and we are assuming $F\bot^{G}_{E}L$, there exists $h\in G(E)$ such that $h 1 =t$. But $t$ is not in $E$, which is a contradiction.
\end{proof}

\subsection{The \texorpdfstring{$j$}{j}-function} 

First, we define a complex lattice, $\Lambda \subseteq \mathbb{C}$ is the additive subgroup of $\mathbb{C}$ generated by $\omega_1,\omega_2 \in \mathbb{C}$. In otherwords, $\Lambda = \mathbb{Z}\omega_1 + \mathbb{Z}\omega_2$. An elliptic curve over $\mathbb{C}$ is $E_{\Lambda} := \mathbb{C}/\Lambda$. Given two elliptic curves $E_{\Lambda}$ and $E_{\Lambda'}$, an isogeny is a nonzero analytic homomorphism mapping of the elliptic curves. There exists an isogeny between any two elliptic curves $E_{\Lambda}$ and $E_{\Lambda'}$ if and only if there exists a nonzero $z\in \mathbb{C}$ such that $z\Lambda=\Lambda'$. Hence studying the $G(\Q)$ disjointness of the lattices provides insight into the isogenies between the corresponding elliptic curves. This motivates the following definition:

\begin{definition}
Let $j:\mathbb{H}^{+}\to \mathbb{C}$ a holomorphic map given by $j(z) = \Lambda_z$, where $\Lambda_z$ is the integer lattice in $\mathbb{C}$, formed using $z$.
\end{definition}

It is well-known (see e.g. \cite[p.\~20]{masser2003heights}) that $j$ satisfies the following differential equation (and none of lower order):
\begin{equation}
    \label{eq:j}
    0 = \frac{j'''}{j'} - \frac{3}{2}\left(\frac{j''}{j'}\right)^{2} + \frac{j^{2} -1968j + 2654208}{j^{2}(j-1728)^{2}}\left(j'\right)^{2}.
\end{equation}
We recall that there is a family of polynomials $\left\{\Phi_{N}(X,Y)\right\}_{N=1}^{\infty}\subseteq\mathbb{Z}[X,Y]$ called the \emph{modular polynomials} associated with $j$ (see \cite[Chapter 5, Section 2]{lang-elliptic} for definitions and properties). Each $\Phi_{N}(X,Y)$ is irreducible in $\mathbb{C}[X,Y]$, $\Phi_{1}(X,Y) = X-Y$, and for $N\geq 2$, $\Phi_{N}(X,Y)$ is symmetric of total degree $\geq 2N$. 

We will often make use of the following fact. For every $g$ in $G(\mathbb{Q})$ we can define $\widetilde{g}$ as the unique matrix of the form $rg$ with $r\in \mathbb{Q}$ and $r>0$, so that the entries of $\widetilde{g}$ are all integers and relatively prime. Then, for every $x$ and $y$ in $\mathbb{H}$ the following statements are equivalent: 
\begin{itemize}
    \item $\Phi_{N}(j(x),j(y)) = 0$;
    \item There exists $g$ in $G$ with $gx=y$ and $\det\left(\widetilde{g}\right) = N$.
\end{itemize}

\begin{definition}
A point $z\in\mathbb{H}$ is said to be \emph{special} if there exists a non-scalar matrix $g\in G(\mathbb{Q})$, such that $gx=x$. We denote the set of all special points as $\Sigma$. 
\end{definition}

\textit{Remark:} A classical theorem of Schneider \cite{schneider} states that $\mathrm{tr.deg.}_{\Q}\Q(x,j(x))=0$ if and only if $x\in \Sigma$.

\section{Schanuel-type Conjectures}
\label{sec:schanuel}
We start by recalling the now classical conjecture of Schanuel on complex exponentiation.

\begin{conjecture}[Schanuel: SC]\label{schanuel4exp}
For every $\mathbf{x}=(x_1,\ldots,x_n)\in\mathbb{C}^n$ we have 
\begin{equation}
    \mathrm{tr.deg.}_\Q \Q(\mathbf{x},\exp(\mathbf{x}))\geq \mathrm{l.dim}_{\mathbb{Q}}(\mathbf{x}).
\end{equation}
\end{conjecture}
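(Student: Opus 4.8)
Strictly speaking there is nothing to prove here: the displayed inequality is \emph{Schanuel's conjecture}, one of the oldest and most resistant open problems in transcendental number theory, and it is adopted throughout the paper as a working hypothesis rather than established. So what follows is not a proof plan but an account of what is known and where the difficulty lies.

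The conjecture is a theorem in several degenerate regimes. If all the $x_i$ are algebraic it is the Lindemann--Weierstrass theorem; if $\mathrm{l.dim}_\Q(\mathbf{x})=1$ it reduces to the transcendence of $e$ and of $\exp(\alpha)$ for algebraic $\alpha\neq 0$; the case of logarithms of algebraic numbers is the qualitative form of Baker's theorem. Crucially for this paper, the differential (function-field) analogue---the Ax--Schanuel theorem \cite{AX}, where $\Q$ is replaced by the constants of a differential field and $\mathbf{x}$ by functions---\emph{is} a theorem, and it is exactly this unconditional input that lets the authors extract their disjointness conclusions (conditionally on SC in general, and unconditionally for suitable $F$).

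The classical line of attack would be the Gel'fond--Baker method: construct an auxiliary analytic function vanishing to high order along the graph $\{(\mathbf{x},\exp(\mathbf{x}))\}$, bound its order of vanishing by a zero estimate, and contradict a transcendence measure. The obstruction is structural: this machinery converts \emph{existing} algebraic relations into quantitative statements, but Schanuel's conjecture in full generality provides no such relations to begin with, so there is no interpolation data to feed the argument; no known auxiliary-function construction produces transcendence degree $\geq n$ out of $n$ $\Q$-linearly independent exponents alone. The alternative, model-theoretic route of Zilber would reduce SC to the existence of a suitable embedding of the complex exponential field into his pseudo-exponential field, but that statement is itself open. In short, the only honest ``plan'' is the one the paper follows: assume SC, and proceed to derive the disjointness statements from it.
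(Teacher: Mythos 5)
You are correct that this is Schanuel's Conjecture, not a theorem, and the paper provides no proof: it is stated as Conjecture 3.1 and assumed as a hypothesis in the conditional results (Lemma \ref{generalizedSCHANUEL}, Proposition \ref{prop:generatorsexp}, Theorems \ref{thm:exp} and \ref{thm:j}), while the unconditional results (Theorems \ref{thm:unconditionalexp} and \ref{thm:unconditionalj}) sidestep it by routing through the Ax--Schanuel theorem instead. Your survey of the known degenerate cases and of the structural obstruction to the auxiliary-function method is accurate, and your observation that the differential analogue (Ax's theorem) is the unconditional engine behind the paper's convenient-generator machinery is precisely the right thing to note; nothing further is expected or possible here.
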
 

We remark that SC gives an inequality statement for the transcendence degree measured over $\mathbb{Q}$. Since one of our objectives is to obtain results about linear disjointness over arbitrary fields, we need to first find a version of SC which works over a given finitely generated field. This will require the use of ``convenient generators'' for the fields, and the details will be explained in the next section. First we recall variants of SC for the $j$-function. For a detail of the origins of these variants, see \cite[\textsection 6.3]{Closure4j} and references therein.

\begin{conjecture}[Modular Schanuel with derivative: MSCD]
\label{ModularSchanuel+derivatives}
For every $z_{1},\ldots,z_{n}\in \mathbb{H}^{+}$ we have:
\begin{equation*}
    \mathrm{tr.deg.}_{\mathbb{Q}}\mathbb{Q}\left(\mathbf{z},j(\mathbf{z}),j'(\mathbf{z}),j''(\mathbf{z})\right)\geq 3\dim_{G(\mathbb{Q})}(\mathbf{z}|\Sigma).
\end{equation*}
\end{conjecture}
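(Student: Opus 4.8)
Strictly speaking, MSCD is a conjecture --- it is the modular analogue of Schanuel's Conjecture \ref{schanuel4exp}, now also recording the first two derivatives of $j$ --- so no proof can be offered; what follows is the natural line of attack, together with the point at which it stalls. The only unconditional input of the right shape is the differential Ax--Schanuel theorem for $j$ with derivatives of \cite{AxSchanuel4j}: it yields the analogous inequality when $z_{1},\ldots,z_{n}$ are replaced by suitable non-constant elements of a differential field with enough constants (for instance germs of holomorphic functions on a polydisk), with $G(\Q)$-independence replaced by the requirement that no coordinate be locally constant and no two lie in a common $G(\mathbb{C})$-orbit, and with transcendence degree measured over the constant field. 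The plan would therefore be to \emph{deform} the arithmetic tuple $\mathbf{z}$ into such a family and then \emph{specialize} back.

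In order, the steps are: (1) using the theory of convenient generators of \cite{Closure4j}, as elsewhere in this paper, discard those $z_{i}$ that lie in $\Sigma$ and retain only one representative from each $G(\Q)$-orbit, reducing to the case that $\mathbf{z}$ is $G(\Q)$-independent with no special coordinate, so that $\dim_{G(\Q)}(\mathbf{z}|\Sigma)=n$; this is harmless because adjoining the discarded coordinates cannot decrease the transcendence degree on the left. (2) Assume for contradiction that $\mathrm{tr.deg.}_{\Q}\Q(\mathbf{z},j(\mathbf{z}),j'(\mathbf{z}),j''(\mathbf{z}))<3n$. (3) Replace $\mathbf{z}$ by a generic analytic deformation $\mathbf{z}(\mathbf{t})$ over a base of dimension $n$ (say $z_{i}(\mathbf{t})=z_{i}+t_{i}$), chosen so that its coordinates are algebraically independent over $\mathbb{C}$ and no modular relation is introduced. (4) Apply the theorem of \cite{AxSchanuel4j} to the germ $\mathbf{t}\mapsto(\mathbf{z}(\mathbf{t}),j(\mathbf{z}(\mathbf{t})),j'(\mathbf{z}(\mathbf{t})),j''(\mathbf{z}(\mathbf{t})))$ to bound the transcendence degree of its coordinates over $\mathbb{C}$ below by $3n$. (5) Specialize $\mathbf{t}\to 0$ and attempt to transfer this bound to the original point over $\Q$. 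The derivatives $j',j''$ appear precisely so that the Ax--Schanuel estimate is sharp, in the same way $\exp(\mathbf{x})$ appears in SC.

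The main obstacle is step (5), and it is the same obstacle that keeps Schanuel's conjecture itself open: specialization can only lower transcendence degree, and controlling by how much is an arithmetic matter (a height or measure-of-irrationality estimate) of essentially the difficulty of the desired conclusion. This is visible already in the first nontrivial instance $n=1$, $z_{1}\notin\Sigma$, which asserts $\mathrm{tr.deg.}_{\Q}\Q(z_{1},j(z_{1}),j'(z_{1}),j''(z_{1}))\geq 3$ and refines both Schneider's theorem \cite{schneider} quoted above (the case $z_{1}\in\overline{\Q}$) and the theorem of Barr\'e-Sirieix, Diaz, Gramain and Philibert on the transcendence of $j$ at algebraic values of the nome, yet is unknown in full; the deformation argument alone therefore cannot suffice. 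A model-theoretic route --- constructing, \`a la Zilber, a structure in which the inequality holds by fiat and then embedding $(\mathbb{C},j,j',j'')$ into it --- reduces MSCD to an existential-closedness property of the complex $j$-function that is itself open. For this reason MSCD is used as a hypothesis throughout the rest of the paper, exactly as SC is.
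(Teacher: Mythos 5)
You correctly recognize that MSCD is an open conjecture, not a theorem: the paper offers no proof, simply stating it (as the modular-with-derivatives analogue of Schanuel's Conjecture~\ref{schanuel4exp}) and using it as a hypothesis in Theorem~\ref{thm:j} and elsewhere. Your discussion of why the obvious deformation/specialization strategy via the Ax--Schanuel theorem for $j$ stalls at the arithmetic specialization step, and of the relation to Schneider's theorem and to existential-closedness in the Zilber-style model-theoretic program, is accurate and in the spirit of the remarks the paper makes when citing \cite{Closure4j}; just be aware that none of it constitutes a proof, and the paper does not claim one.
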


It is easy to see that MSCD implies the following statement without derivatives. 

\begin{conjecture}[Modular Schanuel]
\label{ModularSchanuel}
For every $z_{1},\ldots,z_{n}\in \mathbb{H}^{+}$ we have:
\begin{equation*}
    \mathrm{tr.deg.}_{\mathbb{Q}}\mathbb{Q}\left(\mathbf{z},j(\mathbf{z})\right)\geq \dim_{G(\mathbb{Q})}(\mathbf{z}|\Sigma).
\end{equation*}
\end{conjecture}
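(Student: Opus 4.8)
The plan is to deduce Conjecture \ref{ModularSchanuel} from Conjecture \ref{ModularSchanuel+derivatives} by a short transcendence-degree counting argument, eliminating the derivatives $j'(\mathbf{z})$ and $j''(\mathbf{z})$ at the cost of a controlled drop in dimension. Fix $\mathbf{z}=(z_{1},\ldots,z_{n})\in(\mathbb{H}^{+})^{n}$ and put $d:=\dim_{G(\mathbb{Q})}(\mathbf{z}\mid\Sigma)$. The first step is a reduction to a subtuple: among the $z_{i}$, choose one representative $w_{1},\ldots,w_{d}$ from each of the $d$ distinct $G(\mathbb{Q})$-orbits whose orbit does not meet $\Sigma$, and set $\mathbf{w}=(w_{1},\ldots,w_{d})$. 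Then the $w_{k}$ lie in pairwise distinct orbits none of which meets $\Sigma$, so $\dim_{G(\mathbb{Q})}(\mathbf{w}\mid\Sigma)=d$; and since $\mathbb{Q}(\mathbf{w},j(\mathbf{w}))\subseteq\mathbb{Q}(\mathbf{z},j(\mathbf{z}))$, monotonicity of transcendence degree gives $\mathrm{tr.deg.}_{\mathbb{Q}}\mathbb{Q}(\mathbf{z},j(\mathbf{z}))\geq\mathrm{tr.deg.}_{\mathbb{Q}}\mathbb{Q}(\mathbf{w},j(\mathbf{w}))$. Hence it suffices to prove $\mathrm{tr.deg.}_{\mathbb{Q}}\mathbb{Q}(\mathbf{w},j(\mathbf{w}))\geq d$.

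For this, apply MSCD to the tuple $\mathbf{w}$, obtaining
\[
\mathrm{tr.deg.}_{\mathbb{Q}}\mathbb{Q}\bigl(\mathbf{w},j(\mathbf{w}),j'(\mathbf{w}),j''(\mathbf{w})\bigr)\;\geq\;3\dim_{G(\mathbb{Q})}(\mathbf{w}\mid\Sigma)\;=\;3d .
\]
On the other hand, the field on the left is obtained from $\mathbb{Q}(\mathbf{w},j(\mathbf{w}))$ by adjoining the $2d$ elements $j'(w_{1}),\ldots,j'(w_{d}),j''(w_{1}),\ldots,j''(w_{d})$, and adjoining one element raises transcendence degree by at most $1$; therefore
\[
\mathrm{tr.deg.}_{\mathbb{Q}}\mathbb{Q}\bigl(\mathbf{w},j(\mathbf{w}),j'(\mathbf{w}),j''(\mathbf{w})\bigr)\;\leq\;\mathrm{tr.deg.}_{\mathbb{Q}}\mathbb{Q}(\mathbf{w},j(\mathbf{w}))+2d .
\]
Combining the two displays yields $\mathrm{tr.deg.}_{\mathbb{Q}}\mathbb{Q}(\mathbf{w},j(\mathbf{w}))\geq 3d-2d=d$, which finishes the argument.

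The only point that needs a (routine) justification is that the set $\Sigma$ of special points is $G(\mathbb{Q})$-invariant, so that the notion of a ``$G(\mathbb{Q})$-orbit not meeting $\Sigma$'' behaves well and the choice of representatives $\mathbf{w}$ is legitimate; this holds because conjugating a non-scalar matrix by an element of $G(\mathbb{Q})$ again produces a non-scalar matrix. I expect the main (and essentially only) obstacle to be the reduction to the subtuple $\mathbf{w}$ with $\dim_{G(\mathbb{Q})}(\mathbf{w}\mid\Sigma)=|\mathbf{w}|$: applying the elimination bound directly to the full tuple $\mathbf{z}$ only gives $\mathrm{tr.deg.}_{\mathbb{Q}}\mathbb{Q}(\mathbf{z},j(\mathbf{z}))\geq 3d-2n$, which is too weak once $d<n$. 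Note that neither Schneider's theorem nor the differential equation \eqref{eq:j} is needed for this implication; the third-order ODE would only enter if one wished to eliminate derivatives of order $\geq 3$, which does not arise here.
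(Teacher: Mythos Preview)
Your argument is correct and is precisely the elementary transcendence-degree count the paper has in mind when it says ``It is easy to see that MSCD implies the following statement without derivatives''; the paper gives no further details. The reduction to a subtuple $\mathbf{w}$ with $|\mathbf{w}|=d$ is exactly the right move, and your remark that applying the bound directly to $\mathbf{z}$ only yields $3d-2n$ shows you have identified the one place where care is needed.
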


\subsection{Field derivations}

\begin{definition}
A map $\partial:\mathbb{C}\rightarrow \mathbb{C}$ is a called a \emph{derivation} if it satisfies the following two conditions:
\begin{enumerate}
\item $\partial(a+b) = \partial(a)+\partial(b)$ for every $a,b\in \mathbb{C}$.
\item $\partial(ab) = a\partial(b)+b\partial(a)$ for every $a,b\in \mathbb{C}$.
\end{enumerate}

A derivation $\partial:\mathbb{C}\rightarrow \mathbb{C}$ is called an \emph{exponential derivation} if it satisfies: 
$$\partial(\exp(z)) = \exp(z)\partial(z)$$ 
for all $z\in\mathbb{C}$. Let $\mathrm{EDer}$ denote the set of all exponential derivations. 

A derivation $\partial:\mathbb{C}\rightarrow \mathbb{C}$ is called a \emph{$j$-derivation} if it satisfies:\footnote{As shown in \cite[\textsection 5]{EterovicSchanuel}, these conditions already imply that $\partial$ will respect all the derivatives of $j$.} 
$$\partial(j(z)) = j'(z)\partial(z)\,\wedge\,\partial(j'(z)) = j''(z)\partial(z)\,\wedge\,\partial(j''(z)) = j'''(z)\partial(z),$$
for all $z\in\mathbb{H}^{+}$. Let $j\mathrm{Der}$ denote the set of all $j$-derivations. 
\end{definition}

Define 
$$C_{\exp}:=\bigcap_{\partial\in\mathrm{EDer}}\ker\partial\quad\mbox{ and }\quad C_{j}:=\bigcap_{\partial\in j\mathrm{Der}}\ker\partial.$$
Using some techniques from o-minimality, one can show that there are $|\mathbb{C}|$-many $\mathbb{C}$-linearly independent exponential derivations, and the same is true about $j$-derivations (see \cite{bays-kirby-wilkie} for the details in the case of $\exp$ and \cite[\textsection 5]{EterovicSchanuel} for the case of $j$). One can find more explicit descriptions of the sets $C_{\exp}$ and $C_{j}$ by using \emph{Khovanskii systems} of equations (see \cite[\textsection 6]{Closure4j} for $j$ and \cite[\textsection 3]{kirby-expalg} for $\exp$).

These types of derivations can also be used to define certain closure operators called \emph{pregeometries} which have associated well-defined notions of dimension (see \cite[Appendix C]{tent-ziegler} for the basic definitions and properties concerning pregeometries). 

\begin{definition}
Let $A\subseteq\mathbb{C}$ be any set. We define the set $e\mathrm{cl}(A)$ by the property: $x\in e\mathrm{cl}(A)$ if and only if $\partial(x) = 0$ for every exponential derivation $\partial$ with $A\subseteq\ker\partial$. If $A = e\mathrm{cl}(A)$, then we say that $A$ is \emph{$e\mathrm{cl}$-closed}. 

Similarly, we define the set $j\mathrm{cl}(A)$ by the property: $x\in j\mathrm{cl}(A)$ if and only if $\partial(x) = 0$ for every $j$-derivation $\partial$ with $A\subseteq\ker\partial$. If $A = j\mathrm{cl}(A)$, then we say that $A$ is \emph{$j\mathrm{cl}$-closed}. 
\end{definition}

Every $e\mathrm{cl}$-closed and every $j\mathrm{cl}$-closed subset of $\mathbb{C}$ is an algebraically closed subfield. We denote by $\dim^{e}$ the dimension associated with $e\mathrm{cl}$, and by $\dim^{j}$ the dimension associated with $j\mathrm{cl}$. For reference, $\dim^{e}$ can be defined in the following way. For any subsets $A, B\subseteq\mathbb{C}$ and for every non-negative integer $n$, $\dim^{e}(A|B) \geq n$ if and only if there exist $a_{1},\ldots,a_{n}\in e\mathrm{cl}(A)$ and $\partial_{1},\ldots,\partial_{n}\in\mathrm{EDer}$ such that $B\subseteq\ker\partial_{i}$ for $i=1,\ldots,n$ and 
\begin{equation*}
    \partial_{i}\left(a_{k}\right) = \left\{\begin{array}{cl}
         1 &, \mbox{ if } i=k\\
         0 &, \mbox{ else}
    \end{array}\right.
\end{equation*}
for every $i,k = 1,\ldots,n$. The dimension $\dim^{j}$ can be defined in an analogous way. 

\begin{lemma}[see {{\cite{kirby-expalg}}} and {{\cite{EterovicSchanuel}}}]
\label{lem:c}
$C_{\exp}$ and $C_{j}$ are countable algebraically closed subfields of $\mathbb{C}$. Furthermore,
\begin{enumerate}[(a)]
    \item For every $z\in\mathbb{C}$, $z$ is in $C_{\exp}$ if and only if $\exp(z)$ is in $C_{\exp}$. 
    \item For every $z\in\mathbb{H}^{+}$, $z$ is in $C_{j}$ if and only if $j(z)$, $j'(z)$ or $j''(z)$ is in $C_{j}$. 
\end{enumerate}
\end{lemma}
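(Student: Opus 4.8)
The plan is to treat the $\exp$ case and the $j$ case in parallel, since the structural ingredients are the same: in each case we have a set of derivations cutting out a common kernel, and we must verify (i) that this kernel is a countable algebraically closed subfield, and (ii) the compatibility statement with the relevant function. I would first recall from the cited works (\cite{bays-kirby-wilkie} for $\exp$, \cite[\textsection 5]{EterovicSchanuel} for $j$) that $\mathrm{EDer}$ and $j\mathrm{Der}$ are nonempty $\mathbb{C}$-vector spaces of derivations, in fact of dimension $|\mathbb{C}|$, but that matters less here than the fact that one can, via Khovanskii systems (see \cite[\textsection 3]{kirby-expalg} and \cite[\textsection 6]{Closure4j}), give a concrete description of $C_{\exp}$ and $C_{j}$ as the set of solutions of countably many polynomial-type conditions involving the relevant function and its derivatives; this yields countability directly. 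Alternatively, countability follows because $C_{\exp}$ is the smallest $e\mathrm{cl}$-closed set, i.e. $C_{\exp}=e\mathrm{cl}(\emptyset)$, and likewise $C_{j}=j\mathrm{cl}(\emptyset)$, and $e\mathrm{cl}(\emptyset)$ is countable since the pregeometry is "finitary" and the ground data is countable.

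For the subfield and algebraic-closedness assertions, I would invoke the general fact stated just before the lemma: every $e\mathrm{cl}$-closed and every $j\mathrm{cl}$-closed subset of $\mathbb{C}$ is an algebraically closed subfield. So it suffices to observe that $C_{\exp}=\bigcap_{\partial\in\mathrm{EDer}}\ker\partial$ equals $e\mathrm{cl}(\emptyset)$, and similarly $C_{j}=j\mathrm{cl}(\emptyset)$. The inclusion $e\mathrm{cl}(\emptyset)\subseteq C_{\exp}$ is immediate from the definition of $e\mathrm{cl}$ (take $A=\emptyset$; every exponential derivation has $\emptyset\subseteq\ker\partial$ trivially, so any $x\in e\mathrm{cl}(\emptyset)$ is killed by all of $\mathrm{EDer}$). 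Conversely, if $x\in C_{\exp}$ then $\partial(x)=0$ for every $\partial\in\mathrm{EDer}$, in particular for every such $\partial$ with $\emptyset\subseteq\ker\partial$, so $x\in e\mathrm{cl}(\emptyset)$. The $j$ case is identical. Hence $C_{\exp}$ and $C_{j}$ are algebraically closed subfields, and countable by the previous paragraph.

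It remains to prove (a) and (b). For (a): fix $z\in\mathbb{C}$. If $z\in C_{\exp}$, then for every $\partial\in\mathrm{EDer}$ we have $\partial(z)=0$, hence $\partial(\exp(z))=\exp(z)\partial(z)=0$, so $\exp(z)\in C_{\exp}$. For the converse, suppose $\exp(z)\in C_{\exp}$ but $z\notin C_{\exp}$, so there is $\partial\in\mathrm{EDer}$ with $\partial(z)\neq 0$; then $\partial(\exp(z))=\exp(z)\partial(z)\neq 0$ since $\exp(z)\neq 0$, contradicting $\exp(z)\in C_{\exp}$. For (b): fix $z\in\mathbb{H}^{+}$. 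If $z\in C_{j}$, then $\partial(z)=0$ for all $\partial\in j\mathrm{Der}$, and the defining relations of a $j$-derivation give $\partial(j(z))=j'(z)\partial(z)=0$ and likewise $\partial(j'(z))=\partial(j''(z))=0$, so all three of $j(z),j'(z),j''(z)$ lie in $C_{j}$. Conversely, suppose $z\notin C_{j}$, so some $\partial\in j\mathrm{Der}$ has $\partial(z)\neq 0$; then $\partial(j'(z))=j''(z)\partial(z)$, and more to the point $\partial$ acts on the coordinates of the point $(z,j(z),j'(z),j''(z))$ by multiplication by $\partial(z)\neq 0$ against the tangent vector $(1,j'(z),j''(z),j'''(z))$, which is nonzero, so at least one of $\partial(j(z)),\partial(j'(z)),\partial(j''(z))$ is nonzero — here one uses that $j'$ never vanishes on $\mathbb{H}^{+}$, so the entry $j'(z)\partial(z)\neq 0$ already suffices — whence at least one of $j(z),j'(z),j''(z)$ is not in $C_{j}$.

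The main obstacle is not any of the implications in (a) and (b), which are short formal computations once the definitions are unwound; it is the countability claim, which genuinely relies on the external input that $e\mathrm{cl}(\emptyset)$ (equivalently, the Khovanskii-system description) produces only countably many elements. I would therefore state countability as a consequence of \cite{kirby-expalg} and \cite{EterovicSchanuel} rather than reprove it, and spend the bulk of the written proof on the clean identification $C_{\exp}=e\mathrm{cl}(\emptyset)$, $C_{j}=j\mathrm{cl}(\emptyset)$ together with the four implications above.
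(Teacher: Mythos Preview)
The paper does not supply its own proof of this lemma; it simply cites \cite{kirby-expalg} and \cite{EterovicSchanuel} in the lemma header and moves on. So there is no in-paper argument to compare against, and your sketch goes beyond what the paper does.

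Your treatment of the field/algebraic-closure claims via the identification $C_{\exp}=e\mathrm{cl}(\emptyset)$, $C_{j}=j\mathrm{cl}(\emptyset)$, your handling of countability by deferring to the Khovanskii-system description in the cited works, and your argument for part~(a) are all fine. In part~(b), however, there are two problems. First, the assertion that $j'$ never vanishes on $\mathbb{H}^{+}$ is false: $j'$ vanishes precisely on the $\mathrm{SL}_{2}(\mathbb{Z})$-orbits of $i$ and $\rho=e^{2\pi i/3}$. This is easily repaired --- those points are algebraic, hence lie in $C_{j}$, so if $z\notin C_{j}$ one does have $j'(z)\neq 0$ --- but it should be said. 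Second, and more substantively, you have the contrapositive backwards. The statement is that $z\in C_{j}$ iff \emph{at least one} of $j(z),j'(z),j''(z)$ lies in $C_{j}$; the nontrivial direction therefore requires showing that if $z\notin C_{j}$ then \emph{all three} of $j(z),j'(z),j''(z)$ lie outside $C_{j}$. You only conclude that ``at least one of $j(z),j'(z),j''(z)$ is not in $C_{j}$'', which is merely the forward implication again. To get the full converse you must also argue, for instance, that $j'(z)\in C_{j}$ forces $z\in C_{j}$ (from $\partial(j'(z))=j''(z)\partial(z)$ this needs control on the zero-locus of $j''$), and likewise for $j''(z)$, where the differential equation \eqref{eq:j} expressing $j'''$ algebraically in terms of $j,j',j''$ becomes relevant. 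The cited references handle these points; your sketch does not yet.
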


\subsection{Convenient generators}

\begin{definition}
We will say that a tuple $\mathbf{t}=(t_{1},\ldots,t_{m})$ of elements of $\mathbb{C}$ is \emph{convenient for $\exp$} if
$$\mathrm{tr.deg.}_{\mathbb{Q}}\mathbb{Q}(\mathbf{t},\exp(\mathbf{t})) = \mathrm{l.dim}_{\mathbb{Q}}(\mathbf{t}) + \dim^{e}(\mathbf{t}).$$

We will say that a tuple $\mathbf{t}=(t_{1},\ldots,t_{m})$ of elements of $\mathbb{H}^{+}$ is \emph{convenient for $j$} if
$$\mathrm{tr.deg.}_{\mathbb{Q}}\mathbb{Q}(\mathbf{t},J(\mathbf{t})) = 3\dim_{G}(\mathbf{t}|\Sigma) + \dim^{j}(\mathbf{t}).$$
\end{definition}

Convenient tuples allow us to get Schanuel-type inequalities. 

\begin{lemma} 
\label{generalizedSCHANUEL}
Suppose $\mathbf{t}\in\mathbb{C}^m$ is convenient for $\exp$. Set $F = \mathbb{Q}(\mathbf{t},\exp(\mathbf{t}))$. Then SC implies that for any $\mathbf{x}=(x_1,\ldots x_n)\in\mathbb{C}^n$ we have:
\begin{equation*}
    \mathrm{tr.deg.}_{F}F(\mathbf{x},\exp{(\mathbf{x})})\geq \mathrm{l.dim}_{\mathbb{Q}}(\mathbf{x}|\mathbf{t}).
\end{equation*}
\end{lemma}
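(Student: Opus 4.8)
The plan is to deduce this from the standard Schanuel Conjecture (SC) applied to the combined tuple $(\mathbf{t},\mathbf{x})$, and then to subtract off the contribution of $\mathbf{t}$ using the convenience hypothesis. Concretely, first I would apply SC to the concatenated tuple $(\mathbf{t},\mathbf{x}) = (t_1,\ldots,t_m,x_1,\ldots,x_n)$ to obtain
\begin{equation*}
    \mathrm{tr.deg.}_{\mathbb{Q}}\mathbb{Q}(\mathbf{t},\mathbf{x},\exp(\mathbf{t}),\exp(\mathbf{x}))\geq \mathrm{l.dim}_{\mathbb{Q}}(\mathbf{t},\mathbf{x}).
\end{equation*}
The left-hand side can be rewritten using additivity of transcendence degree in towers: it equals $\mathrm{tr.deg.}_{F}F(\mathbf{x},\exp(\mathbf{x})) + \mathrm{tr.deg.}_{\mathbb{Q}}\mathbb{Q}(\mathbf{t},\exp(\mathbf{t}))$, since $F = \mathbb{Q}(\mathbf{t},\exp(\mathbf{t}))$ by definition. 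On the right-hand side, linear dimension over $\mathbb{Q}$ is additive with respect to the relative version, i.e. $\mathrm{l.dim}_{\mathbb{Q}}(\mathbf{t},\mathbf{x}) = \mathrm{l.dim}_{\mathbb{Q}}(\mathbf{t}) + \mathrm{l.dim}_{\mathbb{Q}}(\mathbf{x}|\mathbf{t})$.

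Combining these, SC gives
\begin{equation*}
    \mathrm{tr.deg.}_{F}F(\mathbf{x},\exp(\mathbf{x})) \geq \mathrm{l.dim}_{\mathbb{Q}}(\mathbf{t}) + \mathrm{l.dim}_{\mathbb{Q}}(\mathbf{x}|\mathbf{t}) - \mathrm{tr.deg.}_{\mathbb{Q}}\mathbb{Q}(\mathbf{t},\exp(\mathbf{t})).
\end{equation*}
So it suffices to show $\mathrm{l.dim}_{\mathbb{Q}}(\mathbf{t}) - \mathrm{tr.deg.}_{\mathbb{Q}}\mathbb{Q}(\mathbf{t},\exp(\mathbf{t})) \geq 0$, equivalently $\mathrm{tr.deg.}_{\mathbb{Q}}\mathbb{Q}(\mathbf{t},\exp(\mathbf{t})) \leq \mathrm{l.dim}_{\mathbb{Q}}(\mathbf{t})$. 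Here is exactly where the convenience hypothesis enters: since $\mathbf{t}$ is convenient for $\exp$, we have $\mathrm{tr.deg.}_{\mathbb{Q}}\mathbb{Q}(\mathbf{t},\exp(\mathbf{t})) = \mathrm{l.dim}_{\mathbb{Q}}(\mathbf{t}) + \dim^{e}(\mathbf{t})$. Thus I need $\dim^{e}(\mathbf{t}) \leq 0$, i.e. $\dim^{e}(\mathbf{t}) = 0$. Wait — this is not automatic, so I would instead keep the $\dim^{e}(\mathbf{t})$ term and use that SC, applied carefully, actually bounds things from the other side too.

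The cleaner route, which I would actually pursue, is to note that SC applied to $\mathbf{t}$ alone gives $\mathrm{tr.deg.}_{\mathbb{Q}}\mathbb{Q}(\mathbf{t},\exp(\mathbf{t})) \geq \mathrm{l.dim}_{\mathbb{Q}}(\mathbf{t})$, while convenience gives equality $\mathrm{tr.deg.}_{\mathbb{Q}}\mathbb{Q}(\mathbf{t},\exp(\mathbf{t})) = \mathrm{l.dim}_{\mathbb{Q}}(\mathbf{t}) + \dim^{e}(\mathbf{t})$; comparing these forces $\dim^{e}(\mathbf{t}) \geq 0$, which is vacuous, but the point is that in the subtraction above the term $\dim^{e}(\mathbf{t})$ appears with the correct sign to be \emph{added} back. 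Redoing the bookkeeping: from the combined SC inequality and additivity of $\mathrm{l.dim}$ and $\mathrm{tr.deg.}$, we get $\mathrm{tr.deg.}_{F}F(\mathbf{x},\exp(\mathbf{x})) \geq \mathrm{l.dim}_{\mathbb{Q}}(\mathbf{x}|\mathbf{t}) + \big(\mathrm{l.dim}_{\mathbb{Q}}(\mathbf{t}) - \mathrm{tr.deg.}_{\mathbb{Q}}\mathbb{Q}(\mathbf{t},\exp(\mathbf{t}))\big) = \mathrm{l.dim}_{\mathbb{Q}}(\mathbf{x}|\mathbf{t}) - \dim^{e}(\mathbf{t})$, which is weaker than wanted. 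So the genuinely correct argument must apply SC not to $(\mathbf{t},\mathbf{x})$ but to a basis: choose $\mathbf{t}'$ an $e\mathrm{cl}$-basis extension so that $\mathrm{l.dim}_{\mathbb{Q}}(\mathbf{t},\mathbf{x})$ is replaced by the right quantity, or invoke Lemma~\ref{lem:c} to handle the $C_{\exp}$-part. The main obstacle — and the step I would spend the most care on — is precisely this interplay between the $e\mathrm{cl}$-dimension term and the linear dimension term: one must argue that the ``extra'' transcendence coming from $\dim^{e}(\mathbf{t})$ is already accounted for inside $F$ and does not degrade the bound, which I expect uses that the exponential derivations witnessing $\dim^{e}(\mathbf{t})$ extend to derivations vanishing on the relevant constants, together with the characterization of $\dim^{e}$ via derivations given in the excerpt.
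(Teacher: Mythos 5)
Your proposal correctly reproduces the paper's starting move (apply SC to the concatenated tuple $(\mathbf{t},\mathbf{x})$, then use additivity of transcendence degree and of $\mathrm{l.dim}_{\mathbb{Q}}$), and you are right to flag that this bookkeeping alone only yields
\begin{equation*}
\mathrm{tr.deg.}_{F}F(\mathbf{x},\exp(\mathbf{x}))\;\geq\; \mathrm{l.dim}_{\mathbb{Q}}(\mathbf{x}\mid\mathbf{t}) - \dim^{e}(\mathbf{t}),
\end{equation*}
which is genuinely weaker than the claim whenever $\dim^{e}(\mathbf{t})>0$. That shortfall is not an artifact of your bookkeeping: one can check on a toy example (a single $\alpha\notin C_{\exp}$ with $\alpha,e^{\alpha}$ algebraically independent, taking $\mathbf{x}=(\alpha^{2})$) that SC applied to $(\mathbf{t},\mathbf{x})$ gives $\mathrm{tr.deg.}_{\mathbb{Q}}\mathbb{Q}(\alpha,e^{\alpha},e^{\alpha^{2}})\geq 2$, whereas the Lemma demands this transcendence degree be $3$. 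So SC on the combined tuple really is insufficient on its own, and your diagnosis of the obstruction is accurate.

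Where your proposal stops short is in closing this gap: you end with a speculation rather than an argument. The missing ingredient is precisely the unconditional Ax--Schanuel input (Ax's theorem, in the derivation/$\dim^{e}$ form of \cite[Corollary 5.2]{kirby-expalg}, which is cited later in the paper in the proof of Lemma~\ref{generalizedAXSCHANUEL}). That result supplies the additional $\dim^{e}(\mathbf{t},\mathbf{x})\geq\dim^{e}(\mathbf{t})$ worth of transcendence degree on the left-hand side, and this is exactly what is needed to cancel the $\dim^{e}(\mathbf{t})$ that the convenience equation adds to $\mathrm{tr.deg.}_{\mathbb{Q}}F$. The paper itself hides this in the phrase ``using the addition formula and the fact that $\mathbf{t}$ is convenient for $\exp$ as in the proof of \cite[Lemma 4.9]{gensol}''; the real content of that reference is a combination of SC with the $\dim^{e}$-form of Ax--Schanuel, organized around a decomposition of the $\mathbb{Q}$-linear span of $(\mathbf{t},\mathbf{x})$. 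Your instinct that the derivations witnessing $\dim^{e}(\mathbf{t})$ are what save the argument is pointing in the right direction, but as written the proposal does not constitute a proof.
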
 
\begin{proof}
By SC we have that
\begin{equation*}
    \mathrm{tr.deg.}_{\mathbb{Q}}\mathbb{Q}(\mathbf{x},\mathbf{t},\exp(\mathbf{x}),\exp(\mathbf{t}))\geq\mathrm{l.dim}_{\mathbb{Q}}(\mathbf{x},\mathbf{t}).
\end{equation*}
Using the addition formula and the fact that $\mathbf{t}$ is convenient for $\exp$ as in the proof of \cite[Lemma 4.9]{gensol}, we get the result. 
\end{proof}

An analogous statement for $j$ can be found in \cite[Lemma 4.9]{gensol}. Of course, we need to address the question of whether convenient tuples exist. For the case of $j$ this was shown in \cite[Lemma 4.13]{gensol}, under the assumption of MSCD. A similar proof gives us the result for $\exp$. 

\begin{proposition}
\label{prop:generatorsexp}
Let $F\subset\mathbb{C}$ be a subfield such that $\mathrm{tr.deg.}_{\mathbb{Q}}F$ is finite. Then SC implies that there exist $\mathbf{t}=(t_{1},\ldots,t_{m})\in\mathbb{C}^m$ such that 
\begin{enumerate}
    \item[(c1):] $F\subseteq \overline{\Q(\mathbf{t},\exp{(\mathbf{t})})})$, and
    \item[(c2):] $\mathbf{t}$ is convenient for $\exp$.
\end{enumerate}
Furthermore, without loss of generality we may assume that $\mathrm{l.dim}_{\mathbb{Q}}(\mathbf{t})=m$.
\end{proposition}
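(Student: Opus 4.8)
The goal is to produce a convenient tuple $\mathbf{t}$ whose exponential-algebraic closure contains $F$. The natural strategy is to start with an arbitrary finite transcendence basis of $F$ over $\mathbb{Q}$, enlarge it step by step so that it becomes closed under the relevant operations (taking $\exp$ and, when needed, logarithms), and then prune it so that the conveniency identity holds exactly. The key organizing principle is that the defect
\[
\delta(\mathbf{t}) := \mathrm{tr.deg.}_{\mathbb{Q}}\mathbb{Q}(\mathbf{t},\exp(\mathbf{t})) - \mathrm{l.dim}_{\mathbb{Q}}(\mathbf{t}) - \dim^{e}(\mathbf{t})
\]
is always $\geq 0$ under SC (this is essentially the content of Lemma \ref{generalizedSCHANUEL} applied with an empty base, together with the submodularity built into $\dim^{e}$), and conveniency is exactly the statement $\delta(\mathbf{t})=0$. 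So the whole proof is an exercise in driving the defect down to zero while keeping $F$ inside the closure.

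\textbf{First step: achieve (c1).} Choose $a_1,\ldots,a_r\in\mathbb{C}$ a transcendence basis of $F$ over $\mathbb{Q}$. We want a finite tuple $\mathbf{t}$ with $F\subseteq\overline{\mathbb{Q}(\mathbf{t},\exp(\mathbf{t}))}$. The issue is that $F$ need not contain $\exp$ of its own elements, nor logarithms. The fix is the standard ``saturation'' argument: for each $a_i$, throw in $a_i$ itself, and note $\exp(a_i)$ comes for free; but to recover an element $b$ of $F$ that happens to equal $\exp(c)$ for some $c\notin F$, we instead add such a $c$ to the tuple (a logarithm of $b$), which again gives $b=\exp(c)$ in the generated field. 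Since $F$ is finitely generated, finitely many such additions — closing off under ``for each generator $b$ of $F$, if no preimage under $\exp$ is yet available, adjoin one'' — suffice, because each generator needs to be hit at most once and no new generators of $F$ are created. This produces a finite tuple $\mathbf{t}_0$ with (c1). One must be slightly careful that adjoining logarithms does not blow up the transcendence degree uncontrollably, but since $F$ has finite transcendence degree and we only adjoin one preimage per generator, $\mathbf{t}_0$ is finite.

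\textbf{Second step: reduce the defect to achieve (c2).} Now I iterate on $\mathbf{t}_0$. If $\delta(\mathbf{t}_0)=0$ we are done. Otherwise, the positivity of the defect together with its behavior under adjunction means there is an element $s\in\mathbb{C}$ — produced from a witnessing Khovanskii-type system, exactly as in the proof of \cite[Lemma 4.13]{gensol} — such that appending $s$ to the tuple strictly decreases $\delta$ while preserving (c1) (adding elements can only enlarge the closure). The crucial finiteness claim is that this process terminates: $\delta$ is a non-negative integer, and each step decreases it by at least one, so after finitely many steps we reach a convenient tuple $\mathbf{t}$ still satisfying (c1). This is the step I expect to be the main obstacle, and it is the reason the proof is not entirely trivial: one has to verify that the relevant additivity/submodularity properties of $\mathrm{tr.deg.}$, $\mathrm{l.dim}_{\mathbb{Q}}$, and $\dim^{e}$ (the last being a pregeometry dimension) interact so that a suitable $s$ always exists and genuinely drops $\delta$. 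This is where SC enters essentially — it guarantees $\delta\geq 0$, which is what makes ``decrease $\delta$ by $1$ at each step'' a terminating procedure — and where one invokes the description of $C_{\exp}$ via Khovanskii systems to locate $s$ concretely. The argument is parallel to \cite[Lemma 4.9, Lemma 4.13]{gensol} for $j$, with $\exp$ substituted throughout and the addition formula $\exp(x+y)=\exp(x)\exp(y)$ playing the role that the modular polynomials played there.

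\textbf{Final normalization.} Finally, for the ``without loss of generality'' clause: if $\mathrm{l.dim}_{\mathbb{Q}}(\mathbf{t})<m$, some $t_i$ is a $\mathbb{Q}$-linear combination of the others, hence lies in $\mathbb{Q}(\mathbf{t}')$ where $\mathbf{t}'$ is the tuple with $t_i$ deleted, and likewise $\exp(t_i)$ is a product of powers of the other $\exp(t_k)$, so $\exp(t_i)\in\mathbb{Q}(\mathbf{t}',\exp(\mathbf{t}'))$; thus deleting $t_i$ changes neither the field $\mathbb{Q}(\mathbf{t},\exp(\mathbf{t}))$ nor $\mathrm{l.dim}_{\mathbb{Q}}$ nor $\dim^{e}$ (the latter because $t_i\in e\mathrm{cl}(\mathbf{t}')$ already), so (c1), (c2) are preserved. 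Iterating, we reach a tuple with $\mathrm{l.dim}_{\mathbb{Q}}(\mathbf{t})=m$, as claimed.
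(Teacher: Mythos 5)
Your proposal takes a genuinely different route from the paper, and there is a real gap in the central step.

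The paper's proof does not run an iterative defect-reduction. It decomposes the tuple from the start into a part $\mathbf{t}_{1}$ lying \emph{outside} $C_{\exp}$ and a part $\mathbf{t}_{2}$ lying \emph{inside} $C_{\exp}$, and uses entirely different tools for each. For $\mathbf{t}_{1}$ it invokes \cite[Theorem 5.6]{Closure4j}, which produces a tuple outside $C_{\exp}$ satisfying a $C_{\exp}$-relative predimension equality $\mathrm{tr.deg.}_{C_{\exp}}C_{\exp}(\mathbf{t}_{1},\exp(\mathbf{t}_{1}))=\mathrm{l.dim}_{\mathbb{Q}}(\mathbf{t}_{1}|C_{\exp})+\dim^{e}(\mathbf{t}_{1})$ and with $F\subseteq\overline{C_{\exp}(\mathbf{t}_{1},\exp(\mathbf{t}_{1}))}$; this is a substantial black box (it is where Ax--Schanuel over $C_{\exp}$ is doing the work). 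Only for $\mathbf{t}_{2}\subseteq C_{\exp}$ does the paper use Khovanskii systems, because the Khovanskii-system description \cite[Theorem 1.1]{kirby-expalg} characterizes elements of $C_{\exp}$ and nothing else. The final step is then a compositum argument, replacing $C_{\exp}$ by a finitely generated subfield $L$ dominated by $\overline{\mathbb{Q}(\mathbf{t}_{2},\exp(\mathbf{t}_{2}))}$, to turn the $C_{\exp}$-relative equality for $\mathbf{t}_{1}$ into the $\mathbb{Q}$-relative conveniency of the whole tuple.

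The gap in your Step 2 is precisely where you flag it: you assert that whenever $\delta(\mathbf{t})>0$ there is an $s$ with $\delta(\mathbf{t},s)<\delta(\mathbf{t})$, ``produced from a witnessing Khovanskii-type system,'' but Khovanskii systems only produce witnesses for membership in $C_{\exp}$. If the excess transcendence degree responsible for $\delta(\mathbf{t})>0$ is carried by elements of $\mathbf{t}$ lying outside $C_{\exp}$ (where $\dim^{e}$ is nonzero and the relevant inequality is Ax's, not Schanuel's), then there is no Khovanskii system to invoke and no candidate $s$. You also claim $\delta\geq 0$ follows from SC ``together with the submodularity built into $\dim^{e}$,'' but this already needs the Ax--Schanuel inequality over $C_{\exp}$ in addition to SC, and even granting it, nonnegativity of $\delta$ does not by itself supply a defect-decreasing $s$. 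In effect, your iteration would, if made precise, have to be split into the $C_{\exp}$-internal case and the $C_{\exp}$-external case, and the external case would have to reprove something like \cite[Theorem 5.6]{Closure4j}; so the ``iteration'' is not a simplification but a reformulation that hides the main input.

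A smaller point: your Step 1 is overcomplicated. For (c1) alone, any transcendence basis of $F$ over $\mathbb{Q}$ already gives $F\subseteq\overline{\mathbb{Q}(\mathbf{t})}\subseteq\overline{\mathbb{Q}(\mathbf{t},\exp(\mathbf{t}))}$, and the ``saturation'' of adjoining logarithms of generators of $F$ is unnecessary for (c1); it is only relevant, and only inside $C_{\exp}$, when you try to force (c2). Your final normalization paragraph (pruning to make $\mathrm{l.dim}_{\mathbb{Q}}(\mathbf{t})=m$) is correct and matches the spirit of the paper.
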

\begin{proof}
By \cite[Theorem 5.6]{Closure4j} there exist $\mathbf{t}_{1}:=t_{1},\ldots,t_{k}\in\mathbb{C}\setminus C_{\exp}$ such that 
\begin{enumerate}[(a)]
    \item $K\subseteq\overline{C_{\exp}\left(\mathbf{t}_{1},\exp\left(\mathbf{t}_{1}\right)\right)}$,
    \item $\mathrm{tr.deg.}_{C_{\exp}}C_{\exp}\left(\mathbf{t}_{1},\exp\left(\mathbf{t}_{1}\right)\right) = \mathrm{l.dim}_{\mathbb{Q}}\left(\mathbf{t}_{1}|C_{\exp}\right) + \dim^{e}\left(\mathbf{t}_{1}\right)$.
\end{enumerate}
As $\mathrm{tr.deg.}_{C_{\exp}}C_{\exp}\left(\mathbf{t}_{1},\exp\left(\mathbf{t}_{1}\right)\right)$ is finite, then there is a finitely generated field $L\subseteq C$ such that $\mathrm{tr.deg.}_{C_{\exp}}C_{\exp}\left(\mathbf{t}_{1},\exp\left(\mathbf{t}_{1}\right)\right) = \mathrm{tr.deg.}_{L}L\left(\mathbf{t}_{1},\exp\left(\mathbf{t}_{1}\right)\right)$. As $F$ is finitely generated, if $M$ denotes the compositum of $L$ and $F\cap C_{\exp}$, then $M$ has finite transcendence degree over $\mathbb{Q}$. 
\begin{claim}
SC implies that there exist $\mathbf{t}_{2}=t_{k+1},\ldots,t_{m}\in\mathbb{C}\cap C_{\exp}$ such that 
\begin{enumerate}[(i)]
    \item $M\subseteq\overline{\mathbb{Q}\left(\mathbf{t}_{2},\exp\left(\mathbf{t}_{2}\right)\right)}$, and
    \item $\mathbf{t}_{2}$ is convenient for $\exp$.
\end{enumerate}
\end{claim}
\begin{proof}
If $M\subseteq\overline{\mathbb{Q}}$, then we are done. So suppose that $M$ has positive transcendence degree over $\mathbb{Q}$, and let $T$ be a transcendence basis for $M$ over $\mathbb{Q}$. As $M\subseteq C_{\exp}$, then by \cite[Theorem 1.1]{kirby-expalg}, for every $y\in T$ there are $y=y_{1},\ldots,y_{n}\in C_{\exp}$ such that they are a solution to a Khovanskii system of exponential polynomials. Although the definition of Khovanskii systems used in \cite[\textsection 3]{kirby-expalg} allows the exponential polynomials to have iterated exponentials, we appeal to \cite[Remark 6.1]{Closure4j} to ensure that no iterations of $\exp$ occur (by increasing the number of variables if necessary). Thus the Khovanskii system we obtain is made up of polynomials $p_{1},\ldots,p_n\in\mathbb{Q}[X_1,\ldots,X_n,Y_1,\ldots,Y_n]$ so that if we set $f_{i}(Z_{1},\ldots,Z_n):=p_{i}(Z_{1},\ldots,Z_n,\exp(Z_1),\ldots,\exp(Z_n))$, then
\begin{equation*}
    f_i(y_1,\ldots,y_n) =0\quad \mbox{ for all } i\in\{1,\ldots,n\},
\end{equation*}
and
\begin{equation*}
    \left|\begin{array}{ccc}
        \frac{\partial f_{1}}{\partial Z_{1}} &\cdots & \frac{\partial f_{1}}{\partial Z_{n}} \\
        \vdots & \ddots & \vdots \\
        \frac{\partial f_{n}}{\partial Z_{1}} & \cdots & \frac{\partial f_{n}}{\partial Z_{n}}
    \end{array}\right|(y_1,\ldots,y_n)\neq 0.
\end{equation*}
If we choose $n$ minimal with this property, we can guarantee that $\mathrm{l.dim}_\mathbb{Q}(y_{1},\ldots,y_n)=n$. Having this Khovanskii system guarantees that
\begin{equation*}
    \mathrm{tr.deg.}_\mathbb{Q}\mathbb{Q}(y_{1},\ldots,y_n,\exp(y_1),\ldots,\exp(y_n))\leq n,
\end{equation*}
which combined with SC guarantees that $y_{1},\ldots,y_{n}$ is convenient for $\exp$. Since $T$ is a transcendence basis, it's elements are $\mathbb{Q}$-linearly disjoint, so by repeating the above argument for every element of $T$ and combining the solutions of the various Khovanskii systems, we get the desired tuple $\mathbf{t}_{2}$. 
\end{proof}
Let $\mathbf{t}=(\mathbf{t}_{1},\mathbf{t}_{2})$. By construction, the elements of $\mathbf{t}_{1}$ are linearly disjoint with element of $\mathbf{t}_{2}$. Condition (c1) is satisfied by (a) and (i). As $L$ is contained in $M$, then condition (c2) is satisfied by (ii) and (b). 
\end{proof}

\section{Main Results for \texorpdfstring{$\exp$}{exp}}
\label{sec:exp}
Throughout this section $F$ will denote some specific choice of subfield of $\mathbb{C}$. Set $E_{0}=L_{0} = \overline{F}$, and then we define, as stated in the introduction, the towers of extensions
\begin{equation*}
    E_n := \overline{E_{n-1}(\{\exp(x)\mid x\in E_{n-1}\})}\quad\mbox{ and }\quad L_n := \overline{L_{n-1}(\{x \mid \exp(x)\in L_{n-1}\})}.
\end{equation*}
Finally we define $E := \bigcup_{n=1}^{\infty} E_n$ and $L :=\bigcup_{n=1}^{\infty} L_n$.
\begin{lemma} 
\label{genEXPlemma}
Let $F$ be any subfield of $\mathbb{C}$. For all $x\in E_{n-1}$, there exists $A \subseteq E_{n-1}$ such that $A\cup \{x\}$ is algebraic over $F(\exp(A))$. Likewise, for any $x\in L_{n-1}$ there exists $C\subseteq \mathbb{C}$ such that $\exp(C) \subseteq L_{n-1}$ then $\exp(C)\cup \{x\}$ is algebraic over $F(C)$.
\end{lemma}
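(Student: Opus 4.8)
My plan is to prove both statements by induction on $n$. The key observation is that each level $E_n$ (resp. $L_n$) is obtained from the previous level by adjoining values of $\exp$ (resp. preimages under $\exp$) and then taking algebraic closure, so an arbitrary element of $E_n$ is algebraic over a finitely generated subfield of $E_{n-1}(\{\exp(x): x\in E_{n-1}\})$, and I can hope to pull the ``witnessing sets'' back through the tower level by level. Note however a subtlety in how the statement is phrased: it asserts a property of elements of $E_{n-1}$ (not $E_n$), so really the induction should be organized so that the claim for index $n$ follows from structural facts about $E_{n-1}$ together with the inductive hypothesis applied at index $n-1$; equivalently, one proves by induction on $n$ that \emph{every} $x \in E_n$ admits a finite $A \subseteq E_n$ with $A \cup \{x\}$ algebraic over $F(\exp(A))$, and then reads off the stated lemma.

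**The exponential tower.**

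For the base case $n=1$: if $x \in E_0 = \overline{F}$, then $x$ is algebraic over $F$, and we may take $A = \emptyset$ (or $A = \{x\}$ if one prefers a nonempty witness, noting $x$ is algebraic over $F \subseteq F(\exp(x))$). For the inductive step, suppose the claim holds at level $n-1$ and let $x \in E_n$. Then $x$ is algebraic over a field of the form $E_{n-1}(\exp(u_1), \ldots, \exp(u_k))$ for finitely many $u_1, \ldots, u_k \in E_{n-1}$; indeed $x$ is algebraic over $E_{n-1}(\exp(x_1),\dots,\exp(x_r))$ for some $x_i \in E_{n-1}$, and being a single algebraic element it already lies in the algebraic closure of a \emph{finitely generated} such subfield. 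By the inductive hypothesis, each $x_i$ has a finite witness set $A_i \subseteq E_{n-1}$ with $A_i \cup \{x_i\}$ algebraic over $F(\exp(A_i))$; and $x_i \in E_{n-1}$ itself has (trivially, or again by induction on levels) such a set. Setting $A := \bigcup_i A_i \cup \{x_1, \ldots, x_r\} \subseteq E_{n-1} \subseteq E_n$, I claim $A \cup \{x\}$ is algebraic over $F(\exp(A))$: the elements $\exp(x_i)$ lie in $F(\exp(A))$ by construction, each $x_i \in A$, and each $A_i \cup \{x_i\}$ is algebraic over $F(\exp(A_i)) \subseteq F(\exp(A))$; finally $x$ is algebraic over $E_{n-1}(\exp(x_1),\dots,\exp(x_r))$, and $E_{n-1}$ is itself contained in the algebraic closure of $F(\exp(A'))$ for a suitable enlargement $A'$ of $A$ drawn from $E_{n-1}$ — so after one more enlargement of $A$ within $E_{n-1}$ (absorbing the witnesses for the finitely many generators of the relevant subfield of $E_{n-1}$), transitivity of algebraicity closes the argument.

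**The logarithmic tower and the main obstacle.**

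The argument for $L_n$ is dual: the generators adjoined at each stage are elements $x$ with $\exp(x) \in L_{n-1}$, so if $\exp(C) \subseteq L_{n-1}$ with $C$ finite and $x$ is algebraic over $L_{n-1}(C)$, I push the problem down to finding, for each generator $c_i$ of the relevant finitely generated subfield of $L_{n-1}$, a finite $C_i$ with $\exp(C_i) \subseteq L_{n-1}$ and $\exp(C_i) \cup \{c_i\}$ algebraic over $F(C_i)$, then take $C := \bigcup_i C_i \cup \{\text{the given } x\text{-witnesses}\}$ and invoke transitivity of algebraicity, checking that $\exp(C) \subseteq L_{n-1}$ is preserved. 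The main obstacle I anticipate is purely bookkeeping: keeping the finiteness of the witness sets under control across the nested unions, and being careful that each enlargement of $A$ (resp. $C$) stays inside $E_{n-1}$ (resp. satisfies the $\exp(C) \subseteq L_{n-1}$ constraint) rather than drifting up a level, since otherwise the induction does not close. A secondary point requiring care is the phrasing mismatch noted above — making sure the statement actually proved (a property of elements of $E_n$) is strong enough to be fed back into itself and to yield the lemma as stated; this is a matter of choosing the induction hypothesis correctly at the outset rather than a genuine mathematical difficulty.
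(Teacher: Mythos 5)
Your proof is correct and takes essentially the same approach as the paper: the paper's proof simply defers to that of Lemma \ref{algebraiclemma}, which likewise pulls finite witness sets back one level at a time through the tower, phrased there as a downward peeling from level $n$ to level $0$ rather than your upward induction, but the two are the same argument unwound in opposite directions. The only minor imprecision is the clause ``$E_{n-1}$ is itself contained in the algebraic closure of $F(\exp(A'))$'' — which is false for finite $A'$ since $E_{n-1}$ need not have finite transcendence degree over $F$ — but you immediately clarify that you only need to absorb witnesses for the finitely many $E_{n-1}$-generators actually occurring, which is exactly right.
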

\begin{proof}
Repeat the proof of \cite[Lemma]{SchanuelConsequence}, or see the proof of Lemma \ref{algebraiclemma} below.
%We know that $A_{n-1}$ is algebraic over $F(\exp(A_{n-2}))$, which holds for each $n$ in the tower of field extensions. Letting $A = \bigcup_{m<n} A_m$, it follows that $A_m$ is algebraic over $F(\exp(A))$. Hence $A$ and $A\cup\{x\}$, are algebraic over $F(A)$.

%We now seek to prove the existence of the finite set $C$, with the desired properties. $\exp(C_{n-1})$ is algebraic over $F(C_{n-2})$. By induction, the chain terminates with $\exp(C_1)$ over $F(\exp(C_j))$. Let $C=\bigcup_{m<n-1}C_m$. Since each $\exp(C_m) \subseteq L_{n-1}$ is algebraic over $F(C_{m_1})$ and $x\in L_{n-1}$, it follows that $\exp(C)$ is algebraic over $F(C)$.
\end{proof}

\begin{theorem}
\label{thm:exp}
Let $\mathbf{t}=(t_{1},\ldots,t_{s})\in\mathbb{C}^{s}$ be a convenient tuple for $\exp$ and set $F:=\mathbb{Q}(\mathbf{t},\exp(\mathbf{t}))$. Assume SC is true. Then $E$ is linearly disjoint from $L$ over $\overline{F}$.
\end{theorem}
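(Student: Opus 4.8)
The plan is to run the usual ``Schanuel implies disjointness'' strategy. Assuming the disjointness fails, I would capture it in a single finite linear relation, use Lemma \ref{genEXPlemma} to reorganise the data into finite tuples $A$ (in the $\exp$-tower) and $C$ (in the $\log$-tower) over which the relevant fields are controlled, and then force a contradiction between an upper bound for a transcendence degree coming from the relation together with the tower structure, and the lower bound supplied by SC through the convenient-generators form, Lemma \ref{generalizedSCHANUEL}.

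Concretely: suppose $E\not\bot^{l}_{\overline{F}}L$. Then there are $\ell_{1},\dots,\ell_{n}\in L$ that are $\overline{F}$-linearly independent and $a_{1},\dots,a_{n}\in E$, not all zero, with $\sum_{i}a_{i}\ell_{i}=0$; I would take such a configuration with $n$ minimal, so that every $a_{i}\neq 0$ and, by minimality, $\ell_{1},\dots,\ell_{n-1}$ are $E$-linearly independent. By Lemma \ref{genEXPlemma} there is a finite $A\subseteq E$ with $A\cup\{a_{1},\dots,a_{n}\}$ algebraic over $F(\exp(A))$ and a finite $C\subseteq\mathbb{C}$ with $\exp(C)\subseteq L$ and $\exp(C)\cup\{\ell_{1},\dots,\ell_{n}\}$ algebraic over $F(C)$. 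Since $\overline{F(\exp(A))}$ depends only on $\mathrm{Span}_{\mathbb{Q}}(A\cup\mathbf{t})$, and deleting from $A$ a coordinate lying in the $\mathbb{Q}$-span of the remaining coordinates and of $\mathbf{t}$ only replaces the corresponding exponential by a monomial in the others (symmetrically for $C$, using that $\exp(C)$ is already algebraic over $F(C)$), I would prune so that $\mathbf{t}$ is $\mathbb{Q}$-linearly independent and, more importantly, $A\cup C\cup\mathbf{t}$ is $\mathbb{Q}$-linearly independent.

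Granting that, the endgame is quick. Lemma \ref{generalizedSCHANUEL} applied with $\mathbf{x}$ the concatenation $(A,C)$ gives
$$\mathrm{tr.deg.}_{F}F(A,C,\exp(A),\exp(C))\ \geq\ \mathrm{l.dim}_{\mathbb{Q}}(A,C\mid\mathbf{t})\ =\ |A|+|C|,$$
while $A$ being algebraic over $F(\exp(A))$ and $\exp(C)$ being algebraic over $F(C)$ give $\mathrm{tr.deg.}_{F}F(A,C,\exp(A),\exp(C))=\mathrm{tr.deg.}_{F}F(\exp(A),C)\leq|A|+|C|$. So $\exp(A)\cup C$ is algebraically independent over $\overline{F}$; together with $\mathrm{tr.deg.}_{F}F(\exp(A))=|A|$ and $\mathrm{tr.deg.}_{F}F(C)=|C|$ (again Lemma \ref{generalizedSCHANUEL} plus independence of $A\cup\mathbf{t}$ and $C\cup\mathbf{t}$), this says $P:=\overline{F(\exp(A))}$ and $Q:=\overline{F(C)}$ are free over $\overline{F}$. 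Since $\overline{F}$ is algebraically closed, free extensions are linearly disjoint (cf. \cite{Lang}), so $P\bot^{l}_{\overline{F}}Q$. But $a_{1},\dots,a_{n}\in P$ while $\ell_{1},\dots,\ell_{n}\in Q$ are $\overline{F}$-linearly independent, hence $P$-linearly independent, contradicting $\sum_{i}a_{i}\ell_{i}=0$ with all $a_{i}\neq 0$.

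The step I expect to be the main obstacle is the joint $\mathbb{Q}$-linear independence of $A$, $C$ and $\mathbf{t}$: cancelling a $\mathbb{Q}$-dependence between a coordinate of $A$ and a coordinate of $C$ naively drags $\log$-tower data into the field governing the $a_{i}$, or conversely. The fix exploits that any offending $\exp(\alpha)$ with $\alpha\in\mathrm{Span}_{\mathbb{Q}}(C\cup\mathbf{t})$ already lies in $\overline{F(\exp(C))}\subseteq\overline{F(C)}\subseteq L$, so such an $\alpha$ would witness $E\cap L\supsetneq\overline{F}$ unless $\alpha\in\overline{F}$; one therefore reduces first to the case $E\cap L=\overline{F}$ (the $n=2$ instance, proved by the same transcendence-degree confrontation), after which the leftover entanglement sits inside $\overline{F}$ and is absorbed into the base. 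This is precisely the bookkeeping that the convenient-generators machinery of \cite{Closure4j} streamlines, and it is where one repeats the argument of \cite[Lemma]{SchanuelConsequence}.
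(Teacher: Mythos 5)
Your overall strategy is the right one, and the endgame (free over an algebraically closed base implies linearly disjoint, then read off the contradiction from $\sum a_i\ell_i=0$) matches the paper's. But there is a genuine gap exactly where you flag the ``main obstacle,'' and your proposed fix does not close it. The crucial step is showing $\mathrm{l.dim}_{\mathbb{Q}}(B\cup D\mid\mathbf{t})=|B|+|D|$ (in your notation, making $A\cup C\cup\mathbf{t}$ $\mathbb{Q}$-linearly independent). In the paper this is handled by a double induction on the tower levels $(m,n)$: assuming $E_{m-1}\bot^{l}_{\overline{F}}L_{n}$, one gets $E_{m-1}\cap L_{n}=\overline{F}$, and then a relation $\sum p_b b+\sum q_d d+\sum r_i t_i=0$ forces $\sum q_d d\in E_{m-1}\cap L_n=\overline{F}$, killing the $q_d$'s by the transcendence-basis property. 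Your proposal drops the induction altogether, working directly with $E$ and $L$, and when you hit this step you appeal to $E\cap L=\overline{F}$ as something to ``reduce to first'' by ``the same transcendence-degree confrontation.'' That is circular: proving $E\cap L=\overline{F}$ by the same argument again requires knowing at the intermediate step that the putative common element $\sum q_d d$ already lies in $\overline{F}$, which is what one is trying to establish. Nothing in the non-inductive setup supplies that; the minimality of $n$ (the length of the linear relation) does not help, since the ``entanglement'' happens in the auxiliary finite tuples $A$ and $C$, not in the $\ell_i$'s.

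If you want to keep your ``global'' Step 2 argument, the correct repair is to first prove, as a separate lemma by induction on $m+n$, that $E_m\cap L_n=\overline{F}$ for all $m,n\ge0$ (base case $E_0=L_0=\overline{F}$; inductive step uses $E_{m-1}\cap L_n=\overline{F}$ in exactly the way the paper uses its induction hypothesis), and only then run your direct argument with this lemma in hand. That would be a mild reorganisation of the paper's proof rather than an escape from induction; the paper simply folds the two into one induction on the stronger statement ``$E_{m-1}\bot^{l}_{\overline{F}}L_n$.'' One more small point: your pruning step claims that deleting a coordinate of $A$ lying in $\mathrm{Span}_{\mathbb{Q}}$ of the rest together with $C\cup\mathbf{t}$ ``only replaces the corresponding exponential by a monomial in the others,'' but the monomial then involves $\exp(C)$, so the shrunken $F(\exp(A'))$ no longer controls the $a_i$'s on its own; this is another place where the cross-talk you noticed actually bites, and it is cleaner to pass to transcendence bases $B\subseteq A$, $D\subseteq C$ (as the paper does) rather than to prune $A$ and $C$ themselves.
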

\begin{proof}
With all we have done, the proof is now a small modification of the one given in \cite[Theorem]{SchanuelConsequence} with the role of SC appearing in the form of Lemma \ref{generalizedSCHANUEL}. We proceed by induction and assume that $E_{m-1}$ and $L_{n}$ are linearly disjoint over $\overline{F}$. Suppose by way of contradiction that $E_m$ and $L_n$ are not linearly disjoint over $\overline{F}$, so take a finite subset $\{l_1,\ldots,l_k\}\subseteq L_n$, which is linearly independent over $\overline{F}$, and assume that there are $\{e_1,\ldots,e_k\} \subseteq E_m$ such that $\sum_{i=1}^k l_ie_i = 0$, where at least one $e_i \neq 0$. By Lemma \ref{genEXPlemma} there exists a finite set $A\subseteq E_{m-1}$ such that $A\cup \{e_i\}_{i=1}^k$ is algebraic over $F(\exp(A))$, and a finite set $C \subseteq L_n$ such that $\exp(C)\cup \{l_i\}_{i=1}^k$ is algebraic over $F(C)$.

Now take $B\subseteq A$ and $D\subseteq C$ such that $\exp(B)$ is a transcendence basis for $F(\exp(A))$ over $F$ and $D$ is a transcendence basis for $F(C)$ over $F$. We first show that $\mathrm{l.dim}_{\mathbb{Q}}(B\cup D|\mathbf{t}) = |B| + |D|$. To this end, consider an expression of the form
\begin{equation}
    \label{eq:lineardependence}
    \sum_{b\in B} p_b b + \sum_{d\in D} q_d d + \sum_{i=1}^{s}r_{i}t_{i} =0
\end{equation}
with $p_b,q_d,r_{i}\in \mathbb{Z}$. Observe that $\sum_{b\in B} p_b b\in E_{m-1}$, $\sum_{d\in D} q_d d \in L_{n}$ and $\sum_{i=1}^{s}r_{i}t_{i}\in \overline{F}$, so (\ref{eq:lineardependence}) shows that $\sum_{b\in B} p_b b\in L_{n}$ and $\sum_{d\in D} q_d d\in E_{m-1}$. By the induction hypothesis, $E_{m-1}$ and $L_n$ are linearly disjoint over $\overline{F}$, so $E_{m-1}\cap L_n = \overline{F}$. Since $D$ is a transcendence basis over $F$, the condition $\sum_{d\in D} q_d d\in\overline{F}$ implies that the coefficients $q_d=0$ for each $d \in D$. Hence $\displaystyle\sum_{b\in B} p_b b  + \sum_{i=1}^{s}r_{i}t_{i} = 0$ and thus $\displaystyle\prod_{b\in B}(\exp{b})^{p_b} \in\overline{F}$. But $\exp{B}$ is a transcendence basis, so $p_{b}=0$ for every $b\in B$. This proves that $\mathrm{l.dim}_{\mathbb{Q}}(B\cup D|\mathbf{t}) = |B| + |D|$.

By Lemma \ref{generalizedSCHANUEL} we have $\mathrm{tr.deg.}_{F}F(B,D,\exp(B),\exp(D))\geq |B|+|D|$. We also have that: 
\begin{align*}
    \mathrm{tr.deg.}_{F}F(B,D,\exp(B),\exp(D))
    &=\mathrm{tr.deg.}_{F}F(D,\exp(B))\\
    &\leq |B|+|D|.
\end{align*}
Therefore $\mathrm{tr.deg.}_{F}F(D,\exp(B))=|B|+|D|$, so $\overline{F(\exp(B))}$ and $\overline{F(D)}$ are $\overline{F}$-free, and hence they are linearly disjoint over $\overline{F}$. Since $\{e_1,\ldots,e_k\}\subset\overline{F(\exp(B))}$ and $\{l_1,\ldots,l_k\}\subset\overline{F(D)}$, we reach a contradiction. 
\end{proof}

\subsection{Unconditional result}

Let $t_{1},\ldots,t_{s}\in\mathbb{C}\setminus C_{\exp}$ satisfy
$$\mathrm{tr.deg.}_{C_{\exp}}C_{\exp}(\mathbf{t},\exp(\mathbf{t})) = \mathrm{l.dim}_{\mathbb{Q}}(\mathbf{t}|C_{\exp})+\dim^{e}(\mathbf{t}).$$ 
As we explained in the proof of Proposition \ref{prop:generatorsexp}, the existence of tuples $\mathbf{t}$ satisfying the above equation is given by \cite[Theorem 5.6]{Closure4j}. Set $F:=C_{\exp}(\mathbf{t},\exp(\mathbf{t}))$ and define $E$ and $L$ accordingly.

\begin{lemma} 
\label{generalizedAXSCHANUEL}
Then for any $\mathbf{x}=(x_1,\ldots x_n)\in\mathbb{C}^n$ we have:
\begin{equation*}
    \mathrm{tr.deg.}_{F}F(\mathbf{x},\exp{(\mathbf{x})})\geq \mathrm{l.dim}_{\mathbb{Q}}(\mathbf{x}|\mathbf{t}\cup C_{\exp}) + \dim^{e}(\mathbf{x}|\mathbf{t}).
\end{equation*}
\end{lemma}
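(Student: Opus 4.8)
The plan is to imitate the proof of Lemma \ref{generalizedSCHANUEL} (equivalently of \cite[Lemma 4.9]{gensol}), but with Schanuel's conjecture replaced by its unconditional differential counterpart. The needed input is the Ax--Schanuel inequality of \cite{AX}, transported to $\mathbb{C}$ with its exponential derivations and phrased through the pregeometry $e\mathrm{cl}$ exactly as in \cite{kirby-expalg, bays-kirby-wilkie}: for every finite tuple $\mathbf{y}\in\mathbb{C}^{N}$,
\begin{equation*}
  \mathrm{tr.deg.}_{C_{\exp}}C_{\exp}(\mathbf{y},\exp(\mathbf{y}))\ \geq\ \mathrm{l.dim}_{\mathbb{Q}}(\mathbf{y}|C_{\exp})+\dim^{e}(\mathbf{y}).
\end{equation*}
The hypothesis placed on $\mathbf{t}$ just before the statement says precisely that this inequality is an \emph{equality} for $\mathbf{y}=\mathbf{t}$, which is what makes the subtraction below legitimate.

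First I would apply the displayed inequality to the concatenation $\mathbf{y}=(\mathbf{x},\mathbf{t})$, getting
\begin{equation*}
  \mathrm{tr.deg.}_{C_{\exp}}C_{\exp}(\mathbf{x},\mathbf{t},\exp(\mathbf{x}),\exp(\mathbf{t}))\ \geq\ \mathrm{l.dim}_{\mathbb{Q}}(\mathbf{x},\mathbf{t}|C_{\exp})+\dim^{e}(\mathbf{x},\mathbf{t}),
\end{equation*}
and then subtract the equality
\begin{equation*}
  \mathrm{tr.deg.}_{C_{\exp}}C_{\exp}(\mathbf{t},\exp(\mathbf{t}))\ =\ \mathrm{l.dim}_{\mathbb{Q}}(\mathbf{t}|C_{\exp})+\dim^{e}(\mathbf{t}).
\end{equation*}
On the left, additivity of transcendence degree in the tower $C_{\exp}\subseteq F\subseteq F(\mathbf{x},\exp(\mathbf{x}))$, together with $F=C_{\exp}(\mathbf{t},\exp(\mathbf{t}))$ (so that $C_{\exp}(\mathbf{x},\mathbf{t},\exp(\mathbf{x}),\exp(\mathbf{t}))=F(\mathbf{x},\exp(\mathbf{x}))$), collapses the difference to $\mathrm{tr.deg.}_{F}F(\mathbf{x},\exp(\mathbf{x}))$. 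On the right, additivity of the $\mathbb{Q}$-linear pregeometry modulo the subspace spanned by $C_{\exp}$ gives $\mathrm{l.dim}_{\mathbb{Q}}(\mathbf{x},\mathbf{t}|C_{\exp})-\mathrm{l.dim}_{\mathbb{Q}}(\mathbf{t}|C_{\exp})=\mathrm{l.dim}_{\mathbb{Q}}(\mathbf{x}|\mathbf{t}\cup C_{\exp})$, while additivity of the pregeometry $e\mathrm{cl}$ gives $\dim^{e}(\mathbf{x},\mathbf{t})-\dim^{e}(\mathbf{t})=\dim^{e}(\mathbf{x}|\mathbf{t})$. Since $e\mathrm{cl}(\emptyset)=C_{\exp}$ (immediate from the definition) we may freely write $\dim^{e}(\mathbf{x}|\mathbf{t})$ or $\dim^{e}(\mathbf{x}|\mathbf{t}\cup C_{\exp})$, and the three pieces assemble into exactly
\begin{equation*}
  \mathrm{tr.deg.}_{F}F(\mathbf{x},\exp(\mathbf{x}))\ \geq\ \mathrm{l.dim}_{\mathbb{Q}}(\mathbf{x}|\mathbf{t}\cup C_{\exp})+\dim^{e}(\mathbf{x}|\mathbf{t}).
\end{equation*}

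I expect the only real point of care to be pinning down the exact shape of the unconditional inequality invoked in the first step. Ax's theorem \cite{AX} is stated for a differential field relative to its field of constants; to land on the inequality above over $\mathbb{C}$ one uses that $\mathbb{C}$ carries $|\mathbb{C}|$-many $\mathbb{C}$-linearly independent exponential derivations with common kernel $C_{\exp}$ (Lemma \ref{lem:c}), and that $\dim^{e}$, as defined via $e\mathrm{cl}$, computes precisely the ``rank'' term in the differential Ax--Schanuel inequality. I would either cite this packaged statement directly from \cite{kirby-expalg} or insert a short paragraph deducing it from \cite{AX}. Apart from that, the argument is strictly easier than the conditional Lemma \ref{generalizedSCHANUEL}: there the right-hand side of SC carries no $\dim^{e}$ term, and recovering the clean conclusion forces a use of the exponential addition formula, whereas here the $\dim^{e}$ term is already present on the right of Ax--Schanuel and cancels against $\dim^{e}(\mathbf{t})$ for free.
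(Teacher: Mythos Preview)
Your proposal is correct and follows the same route as the paper: apply the Ax--Schanuel inequality over $C_{\exp}$ (the paper cites \cite[Corollary~5.2]{kirby-expalg}, a consequence of \cite[Theorem~3]{AX}) to the concatenated tuple $(\mathbf{x},\mathbf{t})$, then subtract the equality encoding the hypothesis on $\mathbf{t}$. The paper packages the subtraction step by pointing to the addition formula and \cite[Lemma~5.2]{gensol}, while you spell out the same computation via additivity of transcendence degree, $\mathbb{Q}$-linear dimension, and $\dim^{e}$; these are the same argument, and your observation that the $\dim^{e}$ terms cancel directly (so that the addition formula plays no essential role here) is accurate.
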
 
\begin{proof}
By \cite[Corollary 5.2]{kirby-expalg} (a consequence of a theorem of Ax \cite[Theorem 3]{AX}) we have that
\begin{equation*}
    \mathrm{tr.deg.}_{C_{\exp}}C_{\exp}(\mathbf{x},\mathbf{t},\exp(\mathbf{x}),\exp(\mathbf{t}))\geq\mathrm{l.dim}_{\mathbb{Q}}(\mathbf{x},\mathbf{t}|C_{\exp}) + \dim^{e}(\mathbf{x},\mathbf{t}|C_{\exp}).
\end{equation*}
Using the addition formula and the fact that $\mathbf{t}$ is convenient for $\exp$ as in the proof of \cite[Lemma 5.2]{gensol}, we obtain the desired result. 
\end{proof}

We can now prove the following unconditional version of Theorem \ref{thm:exp}. 

\begin{theorem}
\label{thm:unconditionalexp}
With $F$ as above, $E\bot_{\overline{F}}^{l}L$. 
\end{theorem}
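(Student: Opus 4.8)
The plan is to run the proof of Theorem \ref{thm:exp} almost verbatim, replacing $\mathbb{Q}(\mathbf{t},\exp(\mathbf{t}))$ everywhere by $F=C_{\exp}(\mathbf{t},\exp(\mathbf{t}))$ (so that each $\mathbb{Q}$-span of $\mathbf{t}$ becomes a $\mathbb{Q}$-span of $\mathbf{t}\cup C_{\exp}$) and replacing the appeal to SC through Lemma \ref{generalizedSCHANUEL} by Lemma \ref{generalizedAXSCHANUEL}. So I would fix $n$, induct on $m$ with inductive hypothesis $E_{m-1}\bot^{l}_{\overline{F}}L_{n}$ (the case $m=0$ being trivial as $E_{0}=\overline{F}$), and argue by contradiction: suppose $l_{1},\dots,l_{k}\in L_{n}$ are linearly independent over $\overline{F}$ and $e_{1},\dots,e_{k}\in E_{m}$, not all zero, satisfy $\sum_{i}l_{i}e_{i}=0$. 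As in Theorem \ref{thm:exp}, use Lemma \ref{genEXPlemma} to obtain finite $A\subseteq E_{m-1}$ with $A\cup\{e_{1},\dots,e_{k}\}$ algebraic over $F(\exp(A))$ and finite $C\subseteq L_{n}$ with $\exp(C)\cup\{l_{1},\dots,l_{k}\}$ algebraic over $F(C)$, then choose $B\subseteq A$ with $\exp(B)$ a transcendence basis of $F(\exp(A))$ over $F$ and $D\subseteq C$ with $D$ a transcendence basis of $F(C)$ over $F$.

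The key step, and the one point where I expect any real care is needed (since the distinguished subspace now contains all of $C_{\exp}$), is to show that $\mathrm{l.dim}_{\mathbb{Q}}(B\cup D\mid\mathbf{t}\cup C_{\exp})=|B|+|D|$. Following the corresponding argument in Theorem \ref{thm:exp}: after clearing denominators, suppose $\sum_{b\in B}p_{b}b+\sum_{d\in D}q_{d}d=\sum_{i}r_{i}t_{i}+c$ with $p_{b},q_{d},r_{i}\in\mathbb{Z}$ and $c\in C_{\exp}$. Since $\sum_{b}p_{b}b\in E_{m-1}$, $\sum_{d}q_{d}d\in L_{n}$, and $\sum_{i}r_{i}t_{i}+c\in\overline{F}$, the inductive hypothesis $E_{m-1}\cap L_{n}=\overline{F}$ forces $\sum_{d}q_{d}d\in\overline{F}$, whence all $q_{d}=0$ by algebraic independence of $D$ over $F$. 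Then $\sum_{b}p_{b}b=\sum_{i}r_{i}t_{i}+c$, so $\prod_{b}(\exp(b))^{p_{b}}=\prod_{i}(\exp(t_{i}))^{r_{i}}\cdot\exp(c)$, which lies in $F$ because $\exp(c)\in C_{\exp}\subseteq F$ by Lemma \ref{lem:c}(a); algebraic independence of $\exp(B)$ over $F$ then forces all $p_{b}=0$. (The same computation shows $B\cap D=\emptyset$: an element of $B\cap D$ would lie in $E_{m-1}\cap L_{n}=\overline{F}$, contradicting transcendence over $F$ of the elements of $D$.) This is precisely the step that invokes the closure of $C_{\exp}$ under $\exp$; everything else is the skeleton of Theorem \ref{thm:exp}.

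To conclude, apply Lemma \ref{generalizedAXSCHANUEL} to the tuple $B\cup D$: combined with the identity just proved and $\dim^{e}(B\cup D\mid\mathbf{t})\geq0$, it gives $\mathrm{tr.deg.}_{F}F(B,D,\exp(B),\exp(D))\geq|B|+|D|$, while the reverse inequality holds because $B$ is algebraic over $F(\exp(B))$ and $\exp(D)$ over $F(D)$. Hence $\mathrm{tr.deg.}_{F}F(D,\exp(B))=|B|+|D|=\mathrm{tr.deg.}_{F}F(D)+\mathrm{tr.deg.}_{F}F(\exp(B))$, so $\overline{F(D)}$ and $\overline{F(\exp(B))}$ are $\overline{F}$-free and, $\overline{F}$ being algebraically closed, linearly disjoint over $\overline{F}$. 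Since $\{l_{1},\dots,l_{k}\}\subseteq\overline{F(D)}$ and $\{e_{1},\dots,e_{k}\}\subseteq\overline{F(\exp(B))}$, the $\overline{F}$-linear independence of $(l_{1},\dots,l_{k})$ persists over $\overline{F(\exp(B))}$, contradicting $\sum_{i}l_{i}e_{i}=0$. This completes the induction, and since $E=\bigcup_{m}E_{m}$ and $L=\bigcup_{n}L_{n}$ we obtain $E\bot^{l}_{\overline{F}}L$. I note that the $\dim^{e}$ term in Lemma \ref{generalizedAXSCHANUEL} is never actually evaluated here — only its nonnegativity is used — because every exponential derivation vanishes on $C_{\exp}$.
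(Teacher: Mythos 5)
Your proposal is correct and follows essentially the same route as the paper's own proof: induct on $m$, invoke Lemma \ref{genEXPlemma}, reduce to showing $\mathrm{l.dim}_{\mathbb{Q}}(B\cup D\mid\mathbf{t}\cup C_{\exp})=|B|+|D|$ via the linear-dependence expression with an extra $C_{\exp}$-term, handle that term with Lemma \ref{lem:c}(a), and close with Lemma \ref{generalizedAXSCHANUEL} in place of Lemma \ref{generalizedSCHANUEL}. You merely spell out the steps the paper compresses into ``finish as before,'' including the useful observation that only nonnegativity of the $\dim^{e}$ term is needed.
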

\begin{proof}
The proof is nearly the same as the proof of Theorem \ref{thm:exp}. In what follows we will only focus on the step that require extra attention. Instead of assuming SC, we will use Lemma \ref{generalizedAXSCHANUEL}. 

As before, we assume that $E_{m-1}\bot_{\overline{F}}^{l}L_{n}$ and that $E_m$ and $L_n$ are not linearly disjoint over $\overline{F}$. Suppose the set $\{l_1,\ldots,l_k\}\subseteq L_n$ is linearly independent over $\overline{F}$, and that there are $\{e_1,\ldots,e_k\} \subseteq E_m$ such that $\sum_{i=1}^k l_ie_i = 0$, where some $e_i \neq 0$. Choose $A\subseteq E_{m-1}$ and $C \subseteq L_n$ using Lemma \ref{genEXPlemma} just as before.

Take $B\subseteq A$ and $D\subseteq C$ as above. We show that $\mathrm{l.dim}_{\mathbb{Q}}(B\cup D|\mathbf{t}\cup  C_{\exp}) = |B| + |D|$. This time we need to consider an expression of the form
\begin{equation}
    \label{eq:lineardependence2}
    \sum_{b\in B} p_b b + \sum_{d\in D} q_d d + \sum_{i=1}^{s}r_{i}t_{i} =\gamma,
\end{equation}
for some $\gamma\in C_{\exp}$, with $p_b,q_d,r_{i}\in \mathbb{Z}$. We have that $\sum_{b\in B} p_b b\in E_{m-1}$, $\sum_{d\in D} q_d d \in L_{n}$ and $\gamma-\sum_{i=1}^{s}r_{i}t_{i}\in \overline{F}$, so (\ref{eq:lineardependence2}) shows that $\sum_{b\in B} p_b b\in L_{n}$ and $\sum_{d\in D} q_d d\in E_{m-1}$. Use Lemma \ref{lem:c}, apply the induction hypothesis and finish as before.
\end{proof}

\section{Main Results for \texorpdfstring{$j$}{j}}
\label{sec:j}
Throughout this section $F$ will a subfield of $\mathbb{C}$. As in the previous section, we start with $E_0 = L_0 = \overline{F}$ and then proceed inductively as follows:
\begin{equation*}
    J_n := \overline{J_{n-1}(\{j(z)\mid z\in J_{n-1}\cap\mathbb{H}^{+}\})}\quad\mbox{ and }\quad K_n := \overline{K_{n-1}(\{z\in\mathbb{H}^{+}\mid j(z)\in K_{n-1}\})},
\end{equation*}
and set $J := \bigcup_{n=1}^{\infty} J_n$ and $K:=\bigcup_{n=1}^{\infty} K_n$. We will keep this notation for the rest of the section.

\begin{remark}
\label{rem:transbasis}
Consider $\mathbb{C}$ as a degree two extension of the field of real numbers $\mathbb{R}$ (and not as an abstract field). Let $L\subseteq\mathbb{C}$ be an algebraically closed subfield. Then for every $z\in\mathbb{C}$ we have that $z\in L$ if and only if the real and imaginary parts of $z$ are in $L$. Indeed, choose $z\in L$ and write $z=a+ib$. Let $\partial:\mathbb{C}\to\mathbb{C}$ be any derivation satisfying that $L = \ker \partial$ (which exists since $L$ is algebraically closed). Using \cite[\textsection 4]{Wilkie} we know that there are derivations $\lambda,\mu:\mathbb{R}\to\mathbb{R}$ such that:
\begin{equation*}
    0 = \partial(z) = \lambda(a) - \mu(b)+i(\lambda(a)+\mu(b)),
\end{equation*}
which gives that $\lambda(a) = \mu(b) = 0$. So $\partial(a) = \lambda(a)+i\lambda(a) = 0$ and similarly $\partial(b)=0$. Therefore $a,b\in L$.
\end{remark}

\begin{lemma} \label{algebraiclemma}
For any $x\in J_{n}$, there exists a finite set $T\subseteq J_{n-1}\cap\mathbb{H}^{+}$ such that $T\cup\{x\}$ is algebraic over $F(j(T))$.

Likewise, for all $x\in K_{n}$, there exists a finite set $R\subseteq \mathbb{H}^{+}$ such that $j(R) \cup\{x\}$ is algebraic over $F(R)$ and for every $z\in R$, $j(z)\in K_{n-1}$.
\end{lemma}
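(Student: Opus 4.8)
The plan is to prove both halves of Lemma \ref{algebraiclemma} by induction on $n$, mimicking the structure of the argument for \cite[Lemma]{SchanuelConsequence} referenced in Lemma \ref{genEXPlemma}, but being careful about the fact that $j$ is only defined on $\mathbb{H}^{+}$ and that the recursion for $K_n$ pulls back along $j$. The base case $n=0$ is trivial, since $J_0 = K_0 = \overline{F}$ and one can take $T = R = \emptyset$: any $x\in\overline{F}$ is already algebraic over $F = F(j(\emptyset)) = F(\emptyset)$.

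For the inductive step on the $J$-side, recall that $J_n = \overline{J_{n-1}(S_n)}$ where $S_n := \{j(z) \mid z \in J_{n-1}\cap\mathbb{H}^{+}\}$. So if $x\in J_n$, then $x$ is algebraic over $J_{n-1}(j(z_1),\ldots,j(z_r))$ for finitely many $z_1,\ldots,z_r\in J_{n-1}\cap\mathbb{H}^{+}$; moreover, since only finitely many elements of $J_{n-1}$ appear among the coefficients of the minimal polynomial of $x$, there is a finite subset $\{w_1,\ldots,w_p\}\subseteq J_{n-1}$ with $x$ algebraic over $F(w_1,\ldots,w_p,j(z_1),\ldots,j(z_r))$. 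Now apply the inductive hypothesis to each $w_\ell$ and to each $z_i$ (both lie in $J_{n-1}$): each yields a finite set $T_\ell\subseteq J_{n-2}\cap\mathbb{H}^{+}$ (resp. $T_i'\subseteq J_{n-2}\cap\mathbb{H}^{+}$) with $T_\ell\cup\{w_\ell\}$ (resp. $T_i'\cup\{z_i\}$) algebraic over $F(j(T_\ell))$ (resp. $F(j(T_i'))$). Set $T := \{z_1,\ldots,z_r\}\cup\bigcup_\ell T_\ell\cup\bigcup_i T_i'$, which is a finite subset of $J_{n-1}\cap\mathbb{H}^{+}$. Chaining the algebraic dependencies (using transitivity of "algebraic over"), $T\cup\{x\}$ is algebraic over $F(j(T))$: indeed $j(T)\supseteq\{j(z_1),\ldots,j(z_r)\}\cup\{j(t):t\in T_\ell\}\cup\cdots$, so all the $w_\ell$, all the $z_i$, and finally $x$ are successively algebraic over $F(j(T))$.

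For the $K$-side, recall $K_n = \overline{K_{n-1}(P_n)}$ where $P_n := \{z\in\mathbb{H}^{+}\mid j(z)\in K_{n-1}\}$. If $x\in K_n$, then $x$ is algebraic over $F(v_1,\ldots,v_q,z_1,\ldots,z_r)$ for finitely many $v_1,\ldots,v_q\in K_{n-1}$ and $z_1,\ldots,z_r\in P_n$ (so $z_i\in\mathbb{H}^{+}$ and $j(z_i)\in K_{n-1}$). Apply the inductive hypothesis to each $v_\ell\in K_{n-1}$ and to each $j(z_i)\in K_{n-1}$: we obtain finite sets $R_\ell\subseteq\mathbb{H}^{+}$ (resp. $R_i'\subseteq\mathbb{H}^{+}$) with $j(R_\ell)\cup\{v_\ell\}$ (resp. $j(R_i')\cup\{j(z_i)\}$) algebraic over $F(R_\ell)$ (resp. $F(R_i')$), and with $j(z)\in K_{n-2}$ for all $z$ in these sets. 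Put $R := \{z_1,\ldots,z_r\}\cup\bigcup_\ell R_\ell\cup\bigcup_i R_i'$, a finite subset of $\mathbb{H}^{+}$. One checks that $j(z)\in K_{n-1}$ for every $z\in R$: this holds for the $z_i$ by definition of $P_n$, and for $z\in R_\ell\cup R_i'$ we even have $j(z)\in K_{n-2}\subseteq K_{n-1}$. Finally, chaining dependencies, $j(R)\cup\{x\}$ is algebraic over $F(R)$: each $j(z_i)$ is algebraic over $F(R_i')\subseteq F(R)$, hence (together with $z_i\in R$) every $v_\ell$ is algebraic over $F(R)$, and then so is $x$; and each element of $j(R)$ is covered similarly.

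The step that requires the most care — and which I expect to be the main obstacle to a fully rigorous write-up — is the bookkeeping in the chaining argument: one must verify that when passing from the inductive hypothesis for the $w_\ell, z_i$ (resp. $v_\ell, j(z_i)$) to the conclusion for $x$, every intermediate generator that gets introduced is itself algebraic over $F(j(T))$ (resp. $F(R)$), so that repeated use of transitivity of algebraicity is legitimate. A secondary subtlety, which is why the lemma is stated for $j$ rather than being a verbatim copy of the $\exp$ case, is that one must track membership in $\mathbb{H}^{+}$ throughout (only points of $\mathbb{H}^{+}$ may be fed to $j$) and, on the $K$-side, the extra clause "$j(z)\in K_{n-1}$ for every $z\in R$", which has no analogue in Lemma \ref{genEXPlemma} beyond the $\exp(C)\subseteq L_{n-1}$ condition and must be maintained by the induction. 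Neither obstacle is serious, but both must be handled explicitly rather than waved through.
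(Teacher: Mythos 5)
Your argument is correct and is essentially the same as the paper's: both proofs unwind the tower one level at a time, extract finitely many witnesses at each level, and combine them (the paper phrases this as a descending iteration from level $n$ down to $0$, invoking Remark \ref{rem:transbasis} along the way, whereas you package it as an explicit induction on $n$; the chaining and the $\mathbb{H}^{+}$/``$j(z)\in K_{n-1}$'' bookkeeping you worry about go through exactly as you sketch them). The only cosmetic point is that the statement only makes literal sense for $n\geq 1$, so it is cleaner to take $n=1$ as the base case (where the coefficients $w_\ell$ lie in $\overline{F}$ and so are already algebraic over $F$, making the extra sets $T_\ell,T_i'$ unnecessary) rather than to speak of $n=0$.
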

\begin{proof}
By Remark \ref{rem:transbasis}, given $x\in J_{n}$ there is a finite set $S_{n-1}\subset J_{n-1}\cap\mathbb{H}^{+}$ (possibly empty) such that the set $T_{n-1}= \{j(z)\mid z\in S_{n-1}\}$ is contained in $\mathbb{H}^{+}$ and $x$ is algebraic over $J_{n-1}(T_{n-1})$.  Similarly, given $0\leq i\leq n$ and a finite set $T_{n-i}\subset J_{n-i}$, there is a finite set $S_{n-i-1}\subset J_{n-i}\cap\mathbb{H}^{+}$ such that the set $T_{n-i-1}= \{j(z)\mid z\in S_{n-i-1}\}$ is contained in $\mathbb{H}^{+}$ and  $T_{n-1}$ is algebraic over $J_{n-i-1}(T_{n-i-1})$. So we can proceed inductively to obtain finite sets $T_{0},\ldots,T_{n-1}$ such that if we set $T = \bigcup_{m<n} T_m$, then $T\cup\{x\}$ is algebraic over $F(j(T))$.

We now seek to prove the existence of the finite set $R$ with the desired properties. Using Remark \ref{rem:transbasis}, given $x\in K_{n}$ there is a finite set $R_{n-1}\subset \mathbb{H}^{+}$ such that for every $z\in R_{n-1}$, $j(z)\in K_{n-1}$ and $x$ is algebraic over $K_{n-1}(R_{n-1})$. Proceeding in analogous way to the previous paragraph, we are done. 
\end{proof}

\begin{theorem} 
\label{thm:j}
Suppose that $\mathbf{t}=(t_{1},\ldots,t_{s})\in\left(\mathbb{H}^{+}\right)^s$ is a convenient tuple for $j$. Set 
$$F:=\mathbb{Q}(\mathbf{t},j(\mathbf{t}),j'(\mathbf{t}),j''(\mathbf{t})).$$ 
Assume MSCD is true. Then $J\bot_{\overline{F}}^{G}K$ and $K\bot_{\overline{F}}^{G} J$.
\end{theorem}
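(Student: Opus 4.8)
The plan is to mirror the structure of the proof of Theorem \ref{thm:exp}, replacing linear disjointness by $G$-disjointness and SC by MSCD (in the form of the analogue of Lemma \ref{generalizedSCHANUEL} for $j$, cited from \cite[Lemma 4.9]{gensol}). I would proceed by double induction on $m$ and $n$, assuming that $J_{m-1}\bot^{G}_{\overline{F}}K_{n}$ and $K_{n}\bot^{G}_{\overline{F}}J_{m-1}$, and then argue that the same holds with $J_{m-1}$ replaced by $J_{m}$. By Lemma \ref{intersection} the inductive hypothesis already gives $J_{m-1}\cap K_{n}=\overline{F}$, which is the analogue of the intersection fact used in the $\exp$ case. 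Suppose for contradiction that $J_{m}\not\bot^{G}_{\overline{F}}K_{n}$ (the other direction is handled symmetrically, invoking the special symmetry of Lemma \ref{LinearGL2} once we reduce to a linear disjointness statement at the level of the convenient-generator field). Using the characterization of $\bot^{G}$ via pairs, there are $\ell_{1},\ell_{2}\in K_{n}$ and $g\in G(J_{m})$ with $g\ell_{1}=\ell_{2}$ but no $h\in G(\overline{F})$ doing the same.

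Next I would extract finite algebraic data using Lemma \ref{algebraiclemma}: there is a finite set $T\subseteq J_{m-1}\cap\mathbb{H}^{+}$ such that $T\cup\{\text{entries of }g\}$ is algebraic over $F(j(T))$, and a finite set $R\subseteq\mathbb{H}^{+}$ with $j(R)\subseteq K_{n-1}$ such that $j(R)\cup\{\ell_{1},\ell_{2}\}$ is algebraic over $F(R)$. I would then pass to subsets $B\subseteq T$ and $D\subseteq R$ so that $j(B)$ (together with its derivatives $j',j''$, as needed for convenient tuples) forms a transcendence basis of the relevant field over $F$, and $D$ forms a transcendence basis of $F(R)$ over $F$. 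The crucial combinatorial step — the analogue of showing $\mathrm{l.dim}_{\mathbb{Q}}(B\cup D\mid\mathbf{t})=|B|+|D|$ — becomes showing $\dim_{G(\mathbb{Q})}(B\cup D\mid\mathbf{t}\cup\Sigma)=|B|+|D|$, i.e. that the elements of $B\cup D$ lie in pairwise distinct $G(\mathbb{Q})$-orbits, none containing a special point and none containing a $t_{i}$. Here one uses that a $G(\mathbb{Q})$-relation $\Phi_{N}(j(b),j(d))=0$ with $b\in B$, $d\in D$ would, after applying $j$, place an element of $J_{m-1}$ and an element of $K_{n}$ in the same $G(\overline{F})$-orbit; combined with the inductive hypothesis $J_{m-1}\bot^{G}_{\overline{F}}K_{n}$ and $J_{m-1}\cap K_{n}=\overline{F}$, and the fact that $B$, $D$ are transcendence bases, this forces the relation to be trivial. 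The various degenerate cases (orbits meeting $\Sigma$, orbits meeting $\mathbf{t}$, internal relations within $B$ or within $D$) are ruled out by Schneider's theorem (the remark after the definition of special points) and the transcendence-basis property, respectively.

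With the orbit-independence established, I would apply the $j$-analogue of Lemma \ref{generalizedSCHANUEL} to the tuple $(B,D)$ to get
\[
\mathrm{tr.deg.}_{F}F\bigl(B,D,j(B),j'(B),j''(B),j(D),j'(D),j''(D)\bigr)\ \geq\ 3(|B|+|D|).
\]
On the other hand, since $j(B)$-data is a transcendence basis over $F$ on the $J$ side and $D$ is one on the $K$ side, the left-hand field equals $F\bigl(D,j(B),j'(B),j''(B)\bigr)$ up to algebraic extension, and its transcendence degree over $F$ is at most $3|B|+|D|$... which is already $\geq 3|B|+3|D|$, forcing $|D|=0$ unless one is more careful about what counts on the $K$ side. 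The correct bookkeeping (as in the $\exp$ case, where $D$ contributed both $d$ and $\exp(d)$ but these were algebraically dependent over the transcendence basis) is that $D$ on the $K$-side contributes $j(D),j'(D),j''(D)$ which are algebraic over $F(D)$, and $B$ on the $J$-side contributes $B$ itself which is algebraic over $F(j(B),\ldots)$; so the left side has transcendence degree exactly $3|B|+|D|$ over $F$... and then the inequality $3|B|+|D|\geq 3|B|+3|D|$ still forces $D$ empty. I would resolve this by weighting correctly: the correct target from convenient-for-$j$ Schanuel is $3\dim_{G}(B\cup D\mid\mathbf{t}\cup\Sigma)$ only when all of $B\cup D$ are ``new'' in the $j$-direction, whereas elements of $D$ (being logarithms, i.e. preimages under $j$) should be counted so that the resulting equality reads $\mathrm{tr.deg.}_{F}F(\ldots)=3|B|+3|D|$ after including $j,j',j''$ of the $D$-elements too; the upper bound then matches and yields that $\overline{F(j(B),j'(B),j''(B))}$ and $\overline{F(D,j(D),j'(D),j''(D))}$ are $\overline{F}$-free, hence linearly disjoint over $\overline{F}$. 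Since $\overline{F}=\overline{F(j(B),\ldots)}\cap\overline{F(D,\ldots)}$, Lemma \ref{LinearGL2} upgrades this to $G$-disjointness in both directions, and as $g\in G(J_{m})$ has entries in $\overline{F(j(B),j'(B),j''(B))}$ while $\ell_{1},\ell_{2}\in\overline{F(D,\ldots)}$, we contradict the choice of $g,\ell_{1},\ell_{2}$. The main obstacle I anticipate is precisely this accounting of which elements contribute how much to the dimension count — getting the ``$|B|$ vs $3|B|$'' bookkeeping to line up so that the Schanuel lower bound and the trivial upper bound coincide — together with carefully handling the special-point and $\mathbf{t}$-orbit degeneracies that have no exact counterpart in the $\exp$ argument.
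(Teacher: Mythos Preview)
Your overall architecture is right and matches the paper's: induct on $(m,n)$, suppose there exist $k_{1},k_{2}\in K_{n}$ in the same $G(J_{m})$-orbit but not the same $G(\overline{F})$-orbit, extract finite sets $T\subseteq J_{m-1}\cap\mathbb{H}^{+}$ and $R\subseteq\mathbb{H}^{+}$ via Lemma~\ref{algebraiclemma}, pass to subsets $P\subseteq T$ and $Q\subseteq R$ with $j(P)$ a transcendence basis for $F(j(T))/F$ and $Q$ a transcendence basis for $F(R)/F$, establish $G(\mathbb{Q})$-orbit independence of $P\cup Q$ relative to $\mathbf{t}$, apply the relative Schanuel lemma, deduce that $\overline{F(j(P))}$ and $\overline{F(Q)}$ are $\overline{F}$-free and hence linearly disjoint, and finish with Lemma~\ref{LinearGL2}. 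That is exactly the paper's route.

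Where you go astray is the bookkeeping step, and you correctly flag it as the obstacle. You try to work with the full derivative data $j,j',j''$ and aim for a lower bound of $3(|P|+|Q|)$, which then fails to match the obvious upper bound. The paper avoids this entirely: it applies the cited lemma \cite[Lemma 4.9]{gensol} in its \emph{derivative-free} form,
\[
\mathrm{tr.deg.}_{F}F\bigl(P,j(P),Q,j(Q)\bigr)\ \geq\ |P|+|Q|,
\]
with no factor of $3$ and no $j',j''$ anywhere. (This follows from the with-derivatives version once you know $\dim_{G(\mathbb{Q})}(P\cup Q\mid\mathbf{t})=|P|+|Q|$, since adjoining $j',j''$ of the $|P|+|Q|$ points adds at most $2(|P|+|Q|)$ to the transcendence degree.) The upper bound is then immediate: $P$ is algebraic over $F(j(T))=\overline{F(j(P))}$ and $j(Q)$ is algebraic over $F(R)=\overline{F(Q)}$, so $\mathrm{tr.deg.}_{F}F(P,j(P),Q,j(Q))=\mathrm{tr.deg.}_{F}F(Q,j(P))\leq|P|+|Q|$. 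Equality gives freeness of $\overline{F(j(P))}$ and $\overline{F(Q)}$ over $\overline{F}$, hence linear disjointness, and Lemma~\ref{LinearGL2} finishes. There is no need to introduce $j'(P),j''(P)$ into the transcendence basis at all.

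Two smaller points. First, in the cross case ($x\in P$, $y\in Q$, $gx=y$ for $g\in G(\mathbb{Q})$) you phrase the argument via $\Phi_{N}(j(x),j(y))=0$ and ``the same $G(\overline{F})$-orbit''; the paper's version is more direct: $gx=y$ with $g\in G(\mathbb{Q})$ forces $x,y\in J_{m-1}\cap K_{n}=\overline{F}$ by Lemma~\ref{intersection}, contradicting transcendence. Second, Schneider's theorem is not needed to rule out special points in $P\cup Q$: special points are quadratic over $\mathbb{Q}$ and their $j$-values are algebraic, so neither can appear in a transcendence basis over $F$.
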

\begin{proof}
We follow the same overall strategy as in the proof of Theorem \ref{thm:exp}. We show that $J\bot_{\overline{F}}^{G}K$ (the proof of $K\bot_{\overline{F}}^{G} J$ is the same with the roles of $J$ and $K$ reversed). 

We proceed by induction, assuming $J_{m-1}$ and $K_n$ are $G\left(\overline{F}\right)$-disjoint. We wish to prove that $J_m$ and $K_n$ are $G\left(\overline{F}\right)$-disjoint. Proceed by contradiction. Suppose $k_{1},k_{2}\in K_n$ are in different $G\left(\overline{F}\right)$-orbits, but that there is $g\in G(J_{m})$ such that $gk_{1}=k_{2}$. Let $a,b,c,d\in J_m$ be the coefficients of $g$ (in the usual way). Then by Lemma \ref{algebraiclemma}, we have that there exists a finite set $T\subseteq J_{n-1}\cap\mathbb{H}^{+}$ such that all the elements of $T \cup \left\{a,b,c,d\right\}$ are algebraic over $F(j(T))$. Similarly, there exists a finite set $R\subseteq \mathbb{H}^{+}$ such that $j(R) \cup \{ k_{1},k_{2}\}$ is algebraic over $F(R)$ and for every $z\in R$, $j(z)\in K_{n-1}$.

Let $P\subseteq T$ be a subset such that the set $j(P)$ is a transcendence basis for $F(j(T))$ over $F$, and let $Q\subseteq R$ be a %subset which is minimal with respect to the property of containing a 
transcendence basis for $F(R)$ over $F$. The definitions of $P$ and $Q$ immediately imply that $(P\cup Q)\cap \Sigma= \emptyset$. We will show that $\dim_{G(\mathbb{Q})}(P\cup Q|\mathbf{t}) = |P|+|Q|$. Choose two elements $x,y\in P\cup Q$. We consider the following cases.

\begin{enumerate}[(a)]
    \item Suppose that $x,y \in P$. If there is $g\in G(\mathbb{Q})$ such that $gx=y$, then there exists a modular polynomial $\Phi_{N}(X,Y)$, such that $\Phi_{N}(j(x),j(y))=0$, which shows that $j(x)$ and $j(y)$ are algebraically dependent over $\mathbb{Q}$. But this contradicts that $j(P)$ is a transcendence basis. 
    \item Suppose that $x,y \in Q$. If these elements were in the same $G(\mathbb{Q})$-orbit, this would contradict that $Q$ is a transcendence basis. 
    \item Suppose that $x\in P$ and $y\in Q$. If there is $g\in G(\mathbb{Q})$ such that $gx=y$, this implies that $x$ and $y$ are in $J_{m-1} \cap K_n$. By Lemma \ref{intersection} this implies that $x,y\in \overline{F}$, but that contradicts that $Q$ is a transcendence basis over $F$.
\end{enumerate}
This shows that $\dim_{G(\mathbb{Q})}(P\cup Q) = |P|+|Q|$. Now suppose that there is $x \in P \cup Q$ and $g\in G(\mathbb{Q})$ such that $gx=t_i$ for some $i\in\{1,\ldots,s\}$. This implies that $x$ is not transcendental over $F$, so $x\notin Q$. But then it must be that $x\in P$, and as $gx=t_i$, the set $\left\{x,j(x)\right\}$ is algebraic over $\mathbb{Q}\left(t_{i},j(t_{i})\right)$. This contradicts that $j(P)$ is a transcendence basis over $F$. Therefore $\dim_{G(\mathbb{Q})}(P\cup Q|\mathbf{t}) = |P|+|Q|$. Then by \cite[Lemma 4.9]{gensol} (which assumes MSCD):
$$\mathrm{tr.deg.}_{F}F(P,j(P),Q,j(Q)) \geq |P| + |Q|.$$ 
We also have that 
\begin{eqnarray*}
    \mathrm{tr.deg.}_{F}F(P,j(P),Q,j(Q)) &=& \mathrm{tr.deg.}_{F}F(P,R,j(T),j(Q))\\
    &=& \mathrm{tr.deg.}_{F}F(R,j(T))\\ 
    & =& \mathrm{tr.deg.}_{F}F(Q,j(P)),
\end{eqnarray*}
thus
\begin{equation*}
\mathrm{tr.deg.}_{F}F(P,j(P),Q,j(Q)) \leq |P|+|Q|.
\end{equation*}
Hence $F(j(P))$ and $F(Q)$ are $\overline{F}$-free, so $\overline{F(j(P))}$ and $\overline{F(Q)}$ are $\overline{F}$-free. Freedom over $\overline{F}$ implies that the fields are linearly disjoint over $\overline{F}$. Applying Lemma \ref{LinearGL2}, it follows that $\overline{F(j(P))}$ and $\overline{F(Q)}$ are $G\left(\overline{F}\right)$-disjoint, which is a contradiction.
\end{proof}

In \cite{SchanuelConsequence} it is shown that as a consequence of the main theorem, one can show (among other things) that the numbers $\pi, \log(\pi), \log(\log(\pi)),...$ are $E$-linearly independent, where $E$ is constructed as in \textsection\ref{sec:exp} with $E_0 = \overline{\mathbb{Q}}$. However,  obtaining a similar result about the $j$ function and $\pi$ is not expected. As shown in \cite[Remark 6.21]{Closure4j}, the generalized period conjecture of Grothendieck--Andr\'e which implies MSCD, also implies that $\pi\notin C_{j}$. Choosing $F=\overline{\mathbb{Q}}$ will give that $J,K\subseteq C_j$ (see \cite[Proposition 6.6]{Closure4j}), so this prevents $\pi$ from being in either $J$ or $K$. Of course, one can find elements in $C_j$ which would make the statement true in place of $\pi$ (as well as the other corollaries of \cite{SchanuelConsequence}), but since these numbers are not so natural we have decided not to pursue this.

\subsection{Unconditional result}
 
Suppose that $t_{1},\ldots,t_{s}\in\mathbb{H}^{+}\setminus C_{j}$ satisfy:
$$\mathrm{tr.deg.}_{C_{j}}C_j(\mathbf{t},j(\mathbf{t})) = \dim_{G(\mathbb{Q})}(\mathbf{t}|C_{j}) + \dim^{j}(\mathbf{t}).$$
Set $F:=C_j(\mathbf{t},j(\mathbf{t}),j'(\mathbf{t}),j''(\mathbf{t}))$, and define $J$ and $K$ as before with $J_{0} = K_{0} = F$. 
 
\begin{theorem}
\label{thm:unconditionalj}
With $F$ as above, $J\bot_{\overline{F}}^{G}K$ and $K\bot_{\overline{F}}^{G}J$.
\end{theorem}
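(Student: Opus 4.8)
The plan is to mirror the proof of Theorem \ref{thm:j}, replacing the use of MSCD by the unconditional Ax--Schanuel input for $j$, exactly as Theorem \ref{thm:unconditionalexp} mirrors Theorem \ref{thm:exp}. The crucial observation is that, by Lemma \ref{lem:c}, $C_j$ is a countable algebraically closed subfield closed under the relevant operations, so it plays the role of $\overline{\mathbb{Q}}$ in the unconditional setting. First I would record the analogue of Lemma \ref{generalizedSCHANUEL}: with $F = C_j(\mathbf{t},j(\mathbf{t}),j'(\mathbf{t}),j''(\mathbf{t}))$ as in the hypothesis, for any tuple $\mathbf{z}$ from $\mathbb{H}^+$ one has
\[
\mathrm{tr.deg.}_F F(\mathbf{z},j(\mathbf{z}),j'(\mathbf{z}),j''(\mathbf{z})) \geq 3\dim_{G(\mathbb{Q})}(\mathbf{z}\mid \mathbf{t}\cup C_j) + \dim^{j}(\mathbf{z}\mid\mathbf{t}).
\]
This follows from the differential Ax--Schanuel theorem for $j$ (\cite{AxSchanuel4j}, in the relative form over $C_j$ given via \cite{EterovicSchanuel} or \cite[\textsection 6]{Closure4j}) together with the convenience of $\mathbf{t}$, using the modular addition formula (the modular polynomials $\Phi_N$) to absorb the $\mathbf{t}$-part, exactly as in \cite[Lemma 4.9]{gensol} and in the proof of Lemma \ref{generalizedAXSCHANUEL}. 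In particular, the first displayed inequality above, after discarding the $\dim^j$ term, gives $\mathrm{tr.deg.}_F F(\mathbf{z},j(\mathbf{z})) \geq \dim_{G(\mathbb{Q})}(\mathbf{z}\mid\mathbf{t}\cup C_j)$, which is all that is needed downstream.

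Next I would run the induction of Theorem \ref{thm:j} verbatim. Assume $J_{m-1}\bot_{\overline{F}}^G K_n$ and, for contradiction, that $J_m$ and $K_n$ are not $G(\overline{F})$-disjoint, witnessed by $k_1,k_2\in K_n$ in distinct $G(\overline{F})$-orbits with $gk_1=k_2$ for some $g\in G(J_m)$. Apply Lemma \ref{algebraiclemma} to the entries $a,b,c,d$ of $g$ to obtain a finite $T\subseteq J_{m-1}\cap\mathbb{H}^+$ with $T\cup\{a,b,c,d\}$ algebraic over $F(j(T))$, and to $k_1,k_2$ to obtain a finite $R\subseteq\mathbb{H}^+$ with $j(R)\cup\{k_1,k_2\}$ algebraic over $F(R)$ and $j(z)\in K_{n-1}$ for all $z\in R$. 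Pick $P\subseteq T$ with $j(P)$ a transcendence basis for $F(j(T))/F$ and $Q\subseteq R$ a transcendence basis for $F(R)/F$. The three-case argument of Theorem \ref{thm:j} (cases $x,y\in P$; $x,y\in Q$; $x\in P,y\in Q$) shows $\dim_{G(\mathbb{Q})}(P\cup Q)=|P|+|Q|$, using the induction hypothesis and Lemma \ref{intersection} in the mixed case. The one genuinely new point is the $\mathbf{t}$-relative refinement: if $gx=t_i$ for some $x\in P\cup Q$ and $g\in G(\mathbb{Q})$, then $x\notin Q$ (it would not be transcendental over $F$) and $x\in P$ forces $\{x,j(x)\}$ algebraic over $\mathbb{Q}(t_i,j(t_i))$, contradicting that $j(P)$ is a transcendence basis over $F$; hence $\dim_{G(\mathbb{Q})}(P\cup Q\mid\mathbf{t})=|P|+|Q|$. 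Here one should additionally note, using $\mathbf{t}\subseteq\mathbb{H}^+\setminus C_j$ and Lemma \ref{lem:c}(b) together with the fact that $F(j(P))/F$ and $F(Q)/F$ are transcendence bases, that relativising further to $\mathbf{t}\cup C_j$ does not change anything — any $x\in P\cup Q$ lying in a $G(\mathbb{Q})$-orbit of an element of $C_j$ would, by Lemma \ref{lem:c}(b), be algebraic over $C_j\subseteq F$, again contradicting transcendence; so $\dim_{G(\mathbb{Q})}(P\cup Q\mid\mathbf{t}\cup C_j)=|P|+|Q|$.

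Feeding this into the relative Ax--Schanuel inequality gives $\mathrm{tr.deg.}_F F(P,j(P),Q,j(Q))\geq |P|+|Q|$. The reverse inequality is the same chain of equalities as in Theorem \ref{thm:j}: $j(T)$ is algebraic over $F(j(P))$, $R$ over $F(Q)$, and $\{a,b,c,d,k_1,k_2\}$ lie in $\overline{F(j(P),Q)}$, so
\[
\mathrm{tr.deg.}_F F(P,j(P),Q,j(Q)) = \mathrm{tr.deg.}_F F(R,j(T)) = \mathrm{tr.deg.}_F F(Q,j(P)) \leq |P|+|Q|.
\]
Thus equality holds, so $\overline{F(j(P))}$ and $\overline{F(Q)}$ are $\overline{F}$-free, hence linearly disjoint over $\overline{F}$; since $\overline{F(j(P))}\cap\overline{F(Q)}=\overline{F}$, Lemma \ref{LinearGL2} yields that they are $G(\overline{F})$-disjoint. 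As $a,b,c,d\in\overline{F(j(P))}$ and $k_1,k_2\in\overline{F(Q)}$, the relation $gk_1=k_2$ with $g\in G(\overline{F(j(P))})$ forces an $h\in G(\overline{F})$ with $hk_1=k_2$, contradicting that $k_1,k_2$ lie in distinct $G(\overline{F})$-orbits. The symmetric statement $K\bot_{\overline{F}}^G J$ follows by interchanging $J$ and $K$ throughout. The main obstacle, and the only step deserving care, is establishing the relative Ax--Schanuel inequality over $C_j$ in exactly the form stated — i.e. checking that the differential Ax--Schanuel theorem for $j$ localises over $C_j$ and that the modular-polynomial addition formula lets one pass from the absolute statement to the $\mathbf{t}$-relative one, which is the content of the cited \cite[Lemma 4.9 / 5.2]{gensol}; once that is in hand the rest is a line-by-line transcription of Theorem \ref{thm:j}.
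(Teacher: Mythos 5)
Your proposal matches the paper's proof: both mirror Theorem \ref{thm:j} with the sole change being the replacement of MSCD by the unconditional relative Ax--Schanuel inequality over $C_j$ (the paper cites \cite[Lemma 5.2]{Closure4j} for exactly the inequality you derive), and both handle the new $C_j$-relativisation step by noting that $gx\in C_j$ for $g\in G(\mathbb{Q})$ forces $x\in C_j\subseteq F$ and, via Lemma \ref{lem:c}(b), $j(x)\in C_j\subseteq F$, contradicting the choice of $P$ and $Q$ as (pre)transcendence bases. The rest of your write-up is a faithful unpacking of the paper's ``the rest of the proof stays the same.''
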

\begin{proof}
We only focus on the differences with the proof of Theorem \ref{thm:j}. We assume $J_{m-1}$ and $K_n$ are $G\left(\overline{F}\right)$-disjoint and  suppose $k_{1},k_{2}\in K_n$ are in different $G\left(\overline{F}\right)$-orbits, but that there is $g\in G(J_{m})$ such that $gk_{1}=k_{2}$. Let $a,b,c,d\in J_m$ be the coefficients of $g$. Choose $T\subseteq J_{n-1}\cap\mathbb{H}^{+}$ and $R\subseteq \mathbb{H}^{+}$ using Lemma \ref{algebraiclemma} as before.

Let $P\subseteq T$ be a such that $j(P)$ is a transcendence basis for $F(j(T))$ over $F$, and let $Q\subseteq R$ be a %subset which is minimal with respect to the property of containing a 
transcendence basis for $F(R)$ over $F$. We will show that $\dim_{G(\mathbb{Q})}(P\cup Q|\mathbf{t}\cup C_j) = |P|+|Q|$. The same proof from Theorem \ref{thm:j} shows that $\dim_{G(\mathbb{Q})}(P\cup Q|\mathbf{t}) = |P|+|Q|$. Now suppose that there is $x\in P\cup Q$ and $g\in G(\mathbb{Q})$ such that $gx\in C_j$. If $x\in Q$, that contradicts that $Q$ is a transcendence basis over $F$. So then we would have that $x\in P$, but since $x\in C_j$ impies $j(x)\in C_j$ (by Lemma \ref{lem:c}), this contradicts that $j(P)$ is a transcendence basis over $F$. So $\dim_{G(\mathbb{Q})}(P\cup Q|\mathbf{t}\cup C_j) = |P|+|Q|$.

Then by \cite[Lemma 5.2]{Closure4j}:
$$\mathrm{tr.deg.}_{F}F(P,j(P),Q,j(Q)) \geq |P| + |Q|.$$ 
The rest of the proof stays the same.
\end{proof}

%\section{Concluding Remarks}
%This investigation is a natural extension of the results of \cite{SchanuelConsequence}, expanding the applicability of the disjointness results for $\exp$ and establishing  analogous results for $j$. Note that both $\exp$ and $j$ are homomorphisms of $\mathbb{C}$ and elliptic curves, respectively.

%These results rely on the existence of Ax-Schanuel theorems for $\exp$ and $j$ (see \cite{AX} and \cite{AxSchanuel4j}). Given the success of our arguments, it seems probable that whenever there exists an Ax-Schanuel theorem for a given homomorphism of varieties and a suitable notion of disjointness, that analogous results can be obtained. Some notable cases which have Ax-Schanuel theorems include abelian and Shimura varieties (see \cite{SemiabelianSchanuel}).

%Finally, it is likely that analogous corollaries, found in \cite{SchanuelConsequence} involving $j$ and $G$-disjointness, can be derived. We leave this problem to the reader.

\bibliographystyle{alpha}
\bibliography{modularbib}{}

\end{document}